\documentclass[10pt, makeidx]{amsart} 
\usepackage{pstricks}
\usepackage{pspicture}
\usepackage{psfrag}
\usepackage{epsfig}
\usepackage{amsmath}
\usepackage{amssymb}
\usepackage{curves}
\usepackage{latexsym}
\usepackage{amsfonts,amsthm}
\usepackage{fancyhdr}
\usepackage{amsxtra,enumerate}
\usepackage{stmaryrd}
\usepackage{hhline}
\usepackage{epic}
\usepackage{eepic}
\usepackage{bm}
\usepackage{graphics}
\usepackage{url}
\usepackage[top=1in, bottom=1in, left=1in, right=1in]{geometry}
\usepackage{hyperref}
\usepackage{graphicx,color}
\newtheorem{theorem}{Theorem}[section]

\theoremstyle{definition}

\newtheorem{proposition}{Proposition}[section]

\numberwithin{equation}{section}
\numberwithin{figure}{section}


\usepackage{scrextend}
\deffootnote[1em]{1em}{1em}{%
  \textsuperscript{\makebox[1em][l]{\thefootnotemark}}}

\def\blackslug{\hbox{\hskip 1pt \vrule width 4pt height 8pt depth 1.5pt
\hskip 1pt}}
\def\qed{\quad\blackslug\lower 8.5pt\null\par}

\makeindex

\begin{document}

\author{Charis Tsikkou} 
\address{Mathematics Department\\West Virginia University \\Morgantown, WV 26506} 
\email{tsikkou@math.wvu.edu}

\title[A Riemann Solution with Singular Shocks]{Singular Shocks in a Chromatography Model}
\subjclass[2000]{Primary 35L65, 35L67; Secondary 34E15, 34C37}
\keywords{Conservation laws; Singular shocks; Dafermos regularization; Geometric singular perturbation theory; Nonhyperbolicity; Blow-up}
\maketitle
 
\begin{center}
\today
\end{center}
\begin{abstract}
We consider a system of two equations that can be used to describe nonlinear chromatography and produce a coherent explanation and description of the unbounded solutions (singular shocks) that appear in Mazzotti's model \cite{Mazzotti2}. We use the methods of Geometric Singular Perturbation Theory, to show existence of a viscous solution to Dafermos-DiPerna regularization. 
\end{abstract}
\maketitle
\setcounter{tocdepth}{1}
\numberwithin{equation}{subsection}
\section{\textbf{Introduction}}\
\\
The aim of this paper is to show existence of no classical Riemann solutions to a physical model with important applications in modern industry. It has been already shown in carefully designed experiments by Mazzotti et al. \cite{Mazzotti1, Mazzotti2, Mazzotti3}, that this model exhibits singular shocks.

Singular shocks, a type of weak solutions of very low regularity have been studied before. There were originally discovered by Keyfitz and Kranzer \cite{B.L.KeyfitzandH.C.Kranzer, wgt}, and later studied in greater depth by Sever \cite{Sever}. Keyfitz and Kranzer \cite{B.L.KeyfitzandH.C.Kranzer} worked with a strictly hyperbolic, genuinely nonlinear system derived from a $1$-dimensional model for isothermal, isentropic gas dynamics and they observed that there is a large region, where the Riemann problem cannot be solved using shocks and rarefactions. They produced approximate unbounded solutions which do not satisfy the equation in the classical weak-solution sense and showed that only the first component of the Rankine-Hugoniot relation is satisfied, giving a unique speed $s$ for which any given two states $U_L$ and $U_R$ can be joined. Later on, Schecter \cite{Schecter} proved existence of a viscous solution following Dafermos's approach \cite{C.M.Dafermos, DafermosDiPerna}, under the condition that the singular shock is overcompressive. Schecter used a geometric method and dynamical systems theory (blowing-up approach to geometric singular perturbation problems that lack normal hyperbolicity, see Fenichel \cite{F} and Jones \cite{J}).  

Keyfitz and Tsikkou \cite{KT}, showed existence of approximations to singular shock solutions by the same method, for a non hyperbolic system (change of type) derived from isentropic gas dynamics for an ideal fluid with $1<\gamma<\frac{5}{3},$ conserving velocity and entropy. Singular shocks also appear in a two-fluid model for incompressible two-phase flow, see Keyfitz et al. \cite{Ke, KeSaSe, KeSeZh}, in a model describing gravity-driven, particle-laden thin-film flow, see Wang and Bertozzi \cite{WB}, Mavromoustaki and Bertozzi \cite{MaBe}, in the Brio system appearing in the study of plasma and the classical shallow-water system see Kalisch and Mitrovic \cite{KaMi} and possibly in a model for chemotaxis, see Levine and Sleeman \cite{LeSl}. 

Naturally, questions then arise about whether it is possible to predict singular shock solutions to systems, find a physical interpretation of their significance, explain the sense in which they satisfy the equation and find a better definition which will describe some wider collection of examples, check for connection between singular shocks, genuinely nonlinear systems and change of type. A few of these questions will be subject of future work.

Investigation of singular solutions was mostly focused on the case when only one state variable develops the Dirac delta function and the others are functions with a bounded variation. We have though other physically important systems with delta functions in more than one state variables. For example, Mazzotti \cite{Mazzotti1, Mazzotti2, Mazzotti3} in his recent work, showed numerically and experimentally that the following model, in a single space dimension and time, arising in two-component chromatography (concentration $u_i$ for chemical i)
\begin{equation}
\frac{\partial}{\partial t}(u_i+\frac{\alpha_i u_i}{1-u_1+u_2})+\frac{\partial u_i}{\partial x}=0, \ i=1, 2, \  \alpha_1<\alpha_2.
\label{chromatography}
\end{equation}
exhibits singular solutions. He obtained approximate solutions, using a linear combination of $\delta$-functions, with an error that converged to zero and showed that neither of the Rankine-Hugoniot equations is satisfied. In system (\ref{chromatography}) that results when some assumptions in the traditional Langmuir equilibrium model are changed, the conserved quantities are the masses of two components flowing at constant speed along a column, cooperating for adsorption sites and is a system which exhibits change of type (hyperbolic and elliptic).

In this paper, we obtain useful information from the Dafermos-DiPerna self-similar regularization and produce an explanation/description of the singular solution in Mazzotti's work. 

In the next section, we derive a simpler system of equations which we will study, by rescaling time and changing the dependent variables. These changes are linear in the conserved quantities so that the form of the system is maintained. Derivation of alternative models will be also subject of future work. In Section 3 we give a formal description of the Riemann solutions, including the cases that include vacuum states. As in Keyfitz et al. \cite{B.L.KeyfitzandH.C.Kranzer, KT}, we are led to the form of the solutions by using a self-similar viscous perturbation of the system. The new system has now similar properties to those in \cite{B.L.KeyfitzandH.C.Kranzer, KT}, as only the first component of the Rankine-Hugoniot relation is satisfied. In Section 4, we use the theory of dynamical systems in the same spirit as in Schecter \cite{Schecter}, Keyfitz and Tsikkou \cite{KT} and more specifically geometric singular perturbation theory (GSPT), see Fenichel \cite{F}, Jones \cite{J}, Krupa and Szmolyan \cite{KS}, Jones and Kopell \cite{JK}, Schecter and Szmolyan \cite{SS}, to construct orbits that connect the left and right states given by \begin{equation}
U(x,0)=\left\{
                      \begin{array}{ll}
                        U_L, & \hbox{$x<0$;} \\
                        U_R, & \hbox{$x\geq 0.$}
                      \end{array}
                    \right.
\label{initialdata}
\end{equation} We also prove existence of self-similar viscous profiles for overcompressive singular shocks for the chromatography model. It should be noted, however, that the symmetry in the orbits is lost and the solutions differ significantly from those of previous models exhibiting singular shocks. 

There is a body of literature on all kinds of chromatography systems of the form
\begin{equation}
\label{kinds}
(u_j)_t+(u_jf_j(\mu_1 u_1+\ldots+\mu_n u_n))_x=0, \ \ x\in\mathbb{R}, \ \ t \geqslant 0, \ \ j=1, 2,
\end{equation} but all the known results are for simplified, everywhere hyperbolic, systems which also belong to Temple class. For recent developments in this direction see Shen \cite{Shen}, Li and Shen \cite{LS} and Sun \cite{Sun1, Sun2} for a system with $f_j(w)=\frac{1}{1+w}, \ \ \mu_1=1, \ \ \mu_2=0$ (the second characteristic family is linearly degenerate); Guo, Pan and Yin \cite{GPY}, Cheng and Yang \cite{CY} for a system with $f_j(w)=1+\frac{1}{1+w}, \ \ \mu_1=-1, \ \ \mu_2=1$ (the first characteristic family is linearly degenerate); Wang \cite{Wang} for a system with $f_j(w)=\frac{1}{1+w}, \ \ \mu_1=-1, \ \ \mu_2=1$ (the first characteristic family is linearly degenerate), and the references cited therein. See also Shelkovich \cite{S2} 
for a class of systems with a different definition of solutions whose components contain Dirac delta functions. These include the system of nonlinear chromatography for $f_j(w)=1+\frac{a_j}{1+w}, \ \ \mu_j=1$ and $a_j$ is Henry's constant. 
 
\section{\textbf{Preliminaries}}\
\\
In this section we derive a simpler system of equations from (\ref{chromatography}) which we study in this paper. Since these changes are linear in the conserved quantities we are not changing the form of the system. We also analyze its basic properties (hyperbolicity, genuine nonlinearity, and the shock and rarefaction curves sketched in Figure \ref{Figure2.1}). Finally, we identify the regions where classical Riemann solutions exist.
\subsection{\textbf{Derivation of the model from chromatography}}\
\\
We start out with the equations in the form used by Mazzotti \cite{Mazzotti2},
\begin{equation}
\left\{
    \begin{array}{ll}
    (u_1+\frac{\alpha_1 u_1}{1-u_1+u_2})_{\tau}+(u_1)_x=0,\\ \\
    (u_2+\frac{\alpha_2 u_2}{1-u_1+u_2})_{\tau}+(u_2)_x=0,
    \end{array}
  \right.
\label{2.1.1}
\end{equation}
with $\alpha_1<\alpha_2.$ To create a system more conventional to conservation laws researchers, we make some changes of variables. First, we change to a moving coordinate system, or rescale time: $$x'=x, \ \  t=\tau-x,$$ so that the system becomes
\begin{equation}
\left\{
    \begin{array}{ll}
    (\frac{\alpha_1 u_1}{1-u_1+u_2})_{t}+(u_1)_{x'}=0,\\ \\
    (\frac{\alpha_2 u_2}{1-u_1+u_2})_{t}+(u_2)_{x'}=0.
    \end{array}
  \right.
\label{2.1.2}
\end{equation}
We then drop the prime in $x.$ The aim is to focus on the conserved quantities $v_1=\frac{\alpha_1 u_1}{1-u_1+u_2}$ and $v_2=\frac{\alpha_2 u_2}{1-u_1+u_2}$ so we also change the dependent variables. If we let $$\omega_1=\frac{u_1}{1-u_1+u_2}, \ \ \omega_2=\frac{u_2}{1-u_1+u_2},$$
then we have
\begin{equation}
\left\{
    \begin{array}{ll}
    (\omega_{1})_t+(\frac{u_1}{\alpha_1})_x=0, \\ \\
    (\omega_{2})_t+(\frac{u_2}{\alpha_2})_x=0.
    \end{array}
  \right.
\label{2.1.3}
\end{equation}
Looking then at (\ref{2.1.3}), we let $$v=(\alpha_1 \alpha_2)^{1/3}(1+\omega_1-\omega_2), \ \ y=\frac{1}{(\alpha_1 \alpha_2)^{1/3}}[\alpha_2 \omega_1-\alpha_1 \omega_2-(\alpha_1+\alpha_2)v],$$
and we find
\begin{equation}
\left\{
    \begin{array}{ll}
    v_{t}+(\frac{y}{v})_x=0, \\ \\
    y_{t}+(\frac{1}{v})_x=0.
    \end{array}
  \right.
\label{2.1.4}
\end{equation}
In the original variables $u_1$ and $u_2$ the new variables can be expressed as
$$\frac{v}{(\alpha_1 \alpha_2)^{1/3}}=\frac{1}{1-u_1+u_2}, \ \ (\alpha_1 \alpha_2)^{1/3}y=\frac{\alpha_2 u_1-\alpha_1 u_2-(\alpha_1+\alpha_2)}{1-u_1+u_2}.$$
This system, equivalent to (\ref{2.1.1}) for smooth solutions, but possessing different weak solutions, expresses conservation of $v$ and $y.$ We define $U=(v,y)^\intercal$ and $F=F(U)=(\frac{y}{v},\frac{1}{v})^\intercal$ the flux function.
We work with the system (\ref{2.1.4}) and Riemann data
\begin{equation}
U(x,0)=\left(
         \begin{array}{c}
           v \\
           y \\
         \end{array}
       \right)
(x,0)=\left\{
                      \begin{array}{ll}
                        U_L, & \hbox{$x<0$;} \\
                        U_R, & \hbox{$x\geq 0$.}
                      \end{array}
                    \right.
\label{2.1.5}
\end{equation}
to show existence of singular shocks. Attention is drawn to the limit $v\rightarrow 0$ where the variables $u_1,$ $u_2$ of (\ref{2.1.1}) become singular. 

\subsection{\textbf{Hyperbolicity and Genuine Nonlinearity}}\
\\
The Jacobian of (\ref{2.1.4}) is
\begin{equation}
\left(
  \begin{array}{cc}
    -\frac{y}{v^2} & \frac{1}{v} \\ \\
    -\frac{1}{v^2} & 0 \\
  \end{array}
\right).
\label{2.2.1}
\end{equation}
The eigenvalues of (\ref{2.2.1}) are
\begin{align}
\lambda_1(v,y)=\frac{-y-\sqrt{y^2-4v}}{2v^2}, \label{2.2.2} \\
\lambda_2(v,y)=\frac{-y+\sqrt{y^2-4v}}{2v^2}. \label{2.2.3}
\end{align}
The eigenvectors are
\begin{align}
r_1&=\left(
      \begin{array}{c}
        2v \\ \\
        y-\sqrt{y^2-4v} \\ 
      \end{array}
    \right), \label{2.2.4} \\ \nonumber \\
r_2&=\left(
      \begin{array}{c}
        2v \\ \\
        y+\sqrt{y^2-4v} \\ 
      \end{array}
    \right)
. \label{2.2.5}
\end{align}
The system (\ref{2.1.4}) is strictly hyperbolic when $4v<y^2,$ and non-hyperbolic when $4v>y^2.$ On $y^2=4v,$ $\lambda_1=\lambda_2,$ and $r_1=r_2.$

For the system (\ref{2.1.1}), since $v_i$ as well as $u_i$ must be positive, Mazzotti, considered only states with $1-u_1+u_2>0$ and data in the hyperbolic part of state space in the closure of the open component neighboring the origin. This physically meaningful experimental situation for (\ref{2.1.4}) corresponds to the region bounded by a curvilinear triangle with vertices
$$O=\left(\alpha, -\frac{\alpha_1+\alpha_2}{\alpha}\right), \ \ A=\left(\frac{\alpha_1}{\alpha_2}\alpha, -\frac{2\alpha_1}{\alpha}\right), \ \ B=\left(\frac{\alpha_2}{\alpha_1}\alpha, -\frac{2\alpha_2}{\alpha}\right),$$
where $\alpha=(\alpha_1 \alpha_2)^{1/3}$ and sides
$$\text{OA:}\ \  y=-\frac{\alpha_2 v}{\alpha^2}-\frac{\alpha_1}{\alpha},$$
$$\text{OB:}\ \  y=-\frac{\alpha_1 v}{\alpha^2}-\frac{\alpha_2}{\alpha}.$$
Therefore $v>0,$ $y<0$ and $\lambda_1(v,y), \ \ \lambda_2(v,y)>0.$
Since $D \lambda_i r_i\neq 0$ if $y^2\neq\frac{16}{3}v$ then the states below $y^2=4v$ and above $y^2=\frac{16}{3}v$ are genuinely nonlinear for both $i$-characteristic families. To stay in the strictly hyperbolic, genuinely nonlinear physically feasible region we need $\frac{\alpha_2}{3}<\alpha_1<3\alpha_2.$
\subsection{\textbf{Rarefaction Curves Through the Left State $U_L$ in the Hyperbolic Region}}\
\\
For $i=1$ or $2,$ the $i$-rarefaction curves are solutions of the system
\begin{equation}
\left(\begin{array}{c}
\dot{v} \\
\dot{y} \\
\end{array}
\right)=\left(
\begin{array}{c}
2v \\
y\mp\sqrt{y^2-4v} \\
\end{array}
\right),
\label{2.3.1}
\end{equation}
where overdot denotes derivative with respect to $\xi=\lambda_i(v,y).$
By the change of variables $w=\sqrt{\frac{y^2-4v}{v}},$ we get $\frac{d}{dv}(w)=\pm \frac{\sqrt{w^2+4}}{2v}$ Upon separation of the variables, integration, further calculations, and returning to the $U$ variables we derive
\begin{align}
R_1(v_L,y_L): \sqrt{y^2-4v}-y=& \frac{v}{v_L}(\sqrt{y_L^2-4v_L}-y_L),
\label{2.3.2} \\
R_2(v_L,y_L): \sqrt{y^2-4v}-y=& \sqrt{y_L^2-4v_L}-y_L.
\label{2.3.3}
\end{align}
The curves $R_1$ and $R_2$ lie in the closure of the hyperbolic region and intersect only at $U_L.$ The curves $R_1$ and $y^2=4v$ intersect (tangentially) at
\begin{equation}
U_G=(v_G,y_G)=\left(\frac{y_G^2}{4},-
\frac{4v_L}{\sqrt{y_L^2-4v_L}-y_L} \right);
\label{2.3.4}
\end{equation}
the curve $R_1$ and the line $\text{OB}$ intersect at
\begin{equation}
U_H=(v_H,y_H)=\left(\frac{-4\alpha^2 v_L^2+2\alpha \alpha_2 v_L(\sqrt{y_L^2-4v_L}-y_L)}{\alpha^2(\sqrt{y_L^2-4v_L}-y_L)^2-2\alpha_1 v_L(\sqrt{y_L^2-4v_L}-y_L)}, -\frac{\alpha_1v_H}{\alpha^2}-\frac{\alpha_2}{\alpha} \right);
\label{2.3.5}
\end{equation}
the curve $R_2$ and the line $\text{OA}$ intersect at
\begin{equation}
U_C=(v_C,y_C)=\left(\frac{(2\alpha \alpha_1-\alpha^2\sqrt{y_L^2-4v_L}+\alpha^2 y_L)(\sqrt{y_L^2-4v_L}-y_L)}{4\alpha^2-2\alpha_2(\sqrt{y_L^2-4v_L}-y_L)}, -\frac{\alpha_2 v_C}{\alpha^2}-\frac{\alpha_1}{\alpha}\right);
\label{2.3.6}
\end{equation}
the curves $R_2$ and $y^2=4v$ intersect at
\begin{equation}
U_D=(v_D,y_D)=\left(\frac{(y_L-\sqrt{y_L^2-4v_L})^2}{4},y_L-\sqrt{y_L^2-4v_L} \right);
\label{2.3.7}
\end{equation}
the curve $R_2$ and the line $\text{OB}$ intersect at
\begin{equation}
U_E=(v_E,y_E)=\left(\frac{(2\alpha \alpha_2-\alpha^2\sqrt{y_L^2-4v_L}+\alpha^2 y_L)(\sqrt{y_L^2-4v_L}-y_L)}{4\alpha^2-2\alpha_1(\sqrt{y_L^2-4v_L}-y_L)}, -\frac{\alpha_1 v_E}{\alpha^2}-\frac{\alpha_2}{\alpha} \right).
\label{2.3.8}
\end{equation}
The portion of $R_i$ with $v<v_{L}$ corresponds to an admissible rarefaction of the $i$th family, $i=1$ or $2.$ \subsection{\textbf{Shock Curves Through the Left State $U_L$ in the Hyperbolic Region}}\
\\
Using the Rankine-Hugoniot jump conditions,
\begin{align}
s[v]_{\text{jump}}&=\left[\frac{y}{v}\right]_{\text{jump}},
\label{2.4.1} \\
s[y]_{\text{jump}}&=\left[\frac{1}{v}\right]_{\text{jump}},
\label{2.4.2}
\end{align}
we derive
\begin{align}
y&=\frac{v y_L}{2v_L}+\frac{y_L}{2}\pm \frac{1}{2}\frac{(v-v_L)}{v_L}\sqrt{y_L^2-4v_L}; \label{2.4.3}
\end{align}
The choice of sign for $S_1$ and $S_2$ is found by calculating
\begin{align*}
\frac{dy}{dv}|_{u=u_{L}}&=\frac{y_L}{2v_L}\pm \frac{\sqrt{y_L^2-4v_L}}{2v_L}, \\
\frac{dR_1}{dv}|_{u=u_{L}}&=\frac{y_L}{2v_L}-\frac{\sqrt{y_L^2-4v_L}}{2v_L}, \\
\frac{dR_2}{dv}|_{u=u_{L}}&=\frac{y_L}{2v_L}+\frac{\sqrt{y_L^2-4v_L}}{2v_L}.
\end{align*}
Since shock and rarefaction curves have second order contact at $U_L,$ we conclude that the states that can be connected to $U_L$ by a 1-shock or 2-shock lie on the curves
\begin{align}
S_1(v_L,y_L): y&=v(\frac{y_L}{2v_L}-\frac{\sqrt{y_L^2-4v_L}}{2v_L})+\frac{y_L}{2}+\frac{\sqrt{y_L^2-4v_L}}{2} \label{2.4.4}
\end{align}
or
\begin{align}
S_2(v_L,y_L): y&=v(\frac{y_L}{2v_L}+\frac{\sqrt{y_L^2-4v_L}}{2v_L})+\frac{y_L}{2}-\frac{\sqrt{y_L^2-4v_L}}{2} \label{2.4.5}
\end{align}
respectively. The curves $S_1$ and $S_2$ intersect at $U_L.$
\subsection{\textbf{The Lax Shock Admissibility Criterion and Classical Riemann Solutions}}\label{2.5}\
\\
By (\ref{2.4.1})-(\ref{2.4.2})
\begin{align}
s_1 &=\frac{-y_L-\sqrt{y_L^2-4v_L}}{2v v_L},\label{2.5.1} \\
s_2 &=\frac{-y_L+\sqrt{y_L^2-4v_L}}{2v v_L}.\label{2.5.2}
\end{align}
From the eigenvalues (\ref{2.2.2})-(\ref{2.2.3}), we conclude that $\lambda_1(v_L,y_L)>s_1>\lambda_1(v,y)$ and $\lambda_2(v_L,y_L)>s_2>\lambda_2(v,y)$ when $v>v_{L}.$ Therefore the admissible parts of the shock curves consist of points with $v>v_{L}$ and the curves of admissible rarefactions, as stated in the previous section, consist of points with $v<v_{L}$ (if $U_L$ is the state on the left).

The curve $S_1$ and the line $\text{OB}$ intersect at
\begin{equation}
U_H=(v_H,y_H)=\left(\frac{-4\alpha^2 v_L^2+2\alpha \alpha_2 v_L(\sqrt{y_L^2-4v_L}-y_L)}{\alpha^2(\sqrt{y_L^2-4v_L}-y_L)^2-2\alpha_1 v_L(\sqrt{y_L^2-4v_L}-y_L)}, -\frac{\alpha_1v_H}{\alpha^2}-\frac{\alpha_2}{\alpha} \right);
\label{2.5.3}
\end{equation}
the curves $S_2$ and $y^2=4v$ intersect at
\begin{equation}
U_D=(v_D,y_D)=\left(\frac{(y_L-\sqrt{y_L^2-4v_L})^2}{4},y_L-\sqrt{y_L^2-4v_L} \right);
\label{2.5.4}
\end{equation}
the curve $S_2$ and the line $\text{OB}$ intersect at
\begin{equation}
U_E=(v_E,y_E)=\left(\frac{(2\alpha \alpha_2-\alpha^2\sqrt{y_L^2-4v_L}+\alpha^2 y_L)(\sqrt{y_L^2-4v_L}-y_L)}{4\alpha^2-2\alpha_1(\sqrt{y_L^2-4v_L}-y_L)}, -\frac{\alpha_1 v_E}{\alpha^2}-\frac{\alpha_2}{\alpha} \right).
\label{2.5.5}
\end{equation}
Figure \ref{Figure2.1} sketches these curves.
\begin{figure}
\psfrag{1}{$O$}
\psfrag{2}{$A$}
\psfrag{3}{$B$}
\psfrag{4}{$C$}
\psfrag{5}{$F$}
\psfrag{6}{$E$}
\psfrag{7}{$H$}
\psfrag{8}{$U_L$}
\psfrag{9}{$S_1$}
\psfrag{10}{$S_2$}
\psfrag{11}{$R_2$}
\psfrag{12}{$R_1$}
\psfrag{13}{$R_2$}
\psfrag{14}{$D$}
\psfrag{15}{$G$}
\begin{center}
\scalebox{0.7}{
\includegraphics{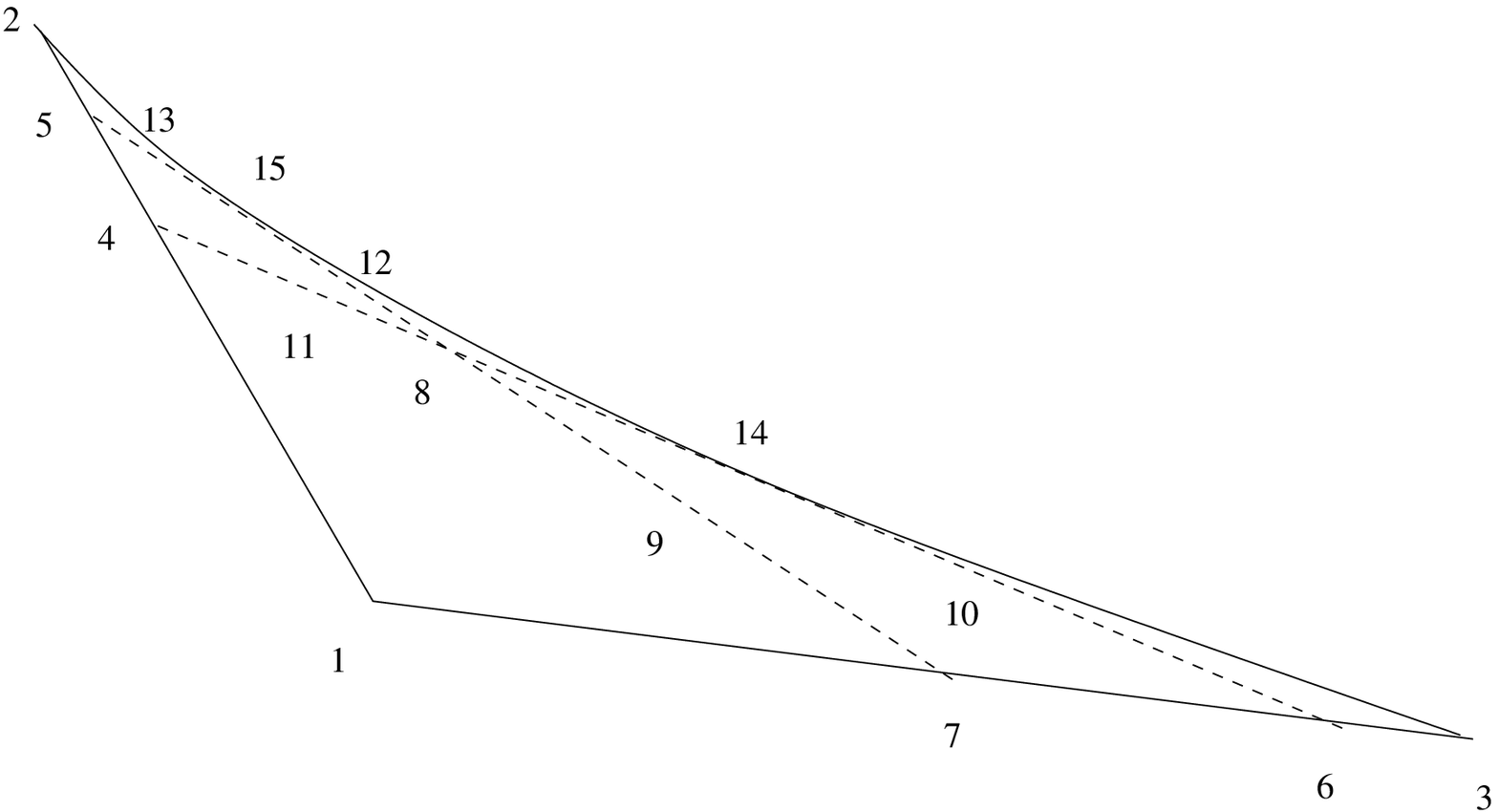}}
\caption{Rarefaction and Shock Curves.}
\label{Figure2.1}
\end{center}
\end{figure}
Using the results of Sections 2.3 and 2.4 and equations (\ref{2.5.1})-(\ref{2.5.2}), we see that in a neighborhood of $U_L$ there exist the usual four regions for the solution of the Riemann problem.

Specifically, we have
\begin{itemize}
  \item Region 1: the unique solution consist of a 1-shock followed by a 2-shock. The region is bounded by $S_1(U_L),$ $S_2(U_L)$ and the line $\text{HE}$ with $v>v_{L}.$
  \item Region 2: the unique solution consist of a 1-rarefaction followed by a 2-rarefaction. We observe that for any $U_L$ the curve $R_1(U_L)$ becomes tangent to $y^2=4v$ at the point $U_G$ identified in equation (\ref{2.3.4}). The (smooth) continuation of this curve is in fact an $R_2$ curve. This curve and the line $\text{OA}$ intersect at
\begin{equation}
U_F=(v_F,y_F)=\left(\frac{-4\alpha^2 v_L^2+2\alpha \alpha_1 v_L(\sqrt{y_L^2-4v_L}-y_L)}{\alpha^2(\sqrt{y_L^2-4v_L}-y_L)^2-2\alpha_2 v_L(\sqrt{y_L^2-4v_L}-y_L)}, -\frac{\alpha_2v_F}{\alpha^2}-\frac{\alpha_1}{\alpha} \right);
\label{2.5.6}
\end{equation}
The region is bounded by $R_2(U_L),$ the line $\text{FC}$ and the curve which begins as $R_1(U_L)$ and continues as $R_2(U_G).$
  \item Region 3: the unique solution consist of a 1-rarefaction followed by a 2-shock. The region is bounded since only the finite interval of $R_1(U_L)$ between $U_L$ and $U_G$ is available for the intermediate state $U_M.$ Furthermore, the interval of admissible points $U_R\in S_2(U_M)$ terminates at a point $U_D(U_M)$ at which the shock speed $s_2=\lambda_1(U_M).$ The upper boundary of Region 3 is the curve $y^2=4v.$ This curve is tangent to $S_2(U_L)$ at the point $U_D.$
  \item Region 4: the unique solution consist of a 1-shock followed by a 2-rarefaction.The region is bounded by the lines $\text{OH},$ $\text{CO},$ $S_1(U_L)$ and the curve $R_2(U_L).$
\end{itemize}
\begin{figure}
\psfrag{1}{$O$}
\psfrag{2}{$A$}
\psfrag{3}{$B$}
\psfrag{4}{$C$}
\psfrag{5}{$F$}
\psfrag{6}{$E$}
\psfrag{7}{$H$}
\psfrag{8}{$U_L$}
\psfrag{9}{$4$}
\psfrag{10}{$1$}
\psfrag{11}{$3$}
\psfrag{12}{$2$}
\psfrag{13}{$5$}
\psfrag{14}{$6$}
\begin{center}
\scalebox{0.7}{
\includegraphics{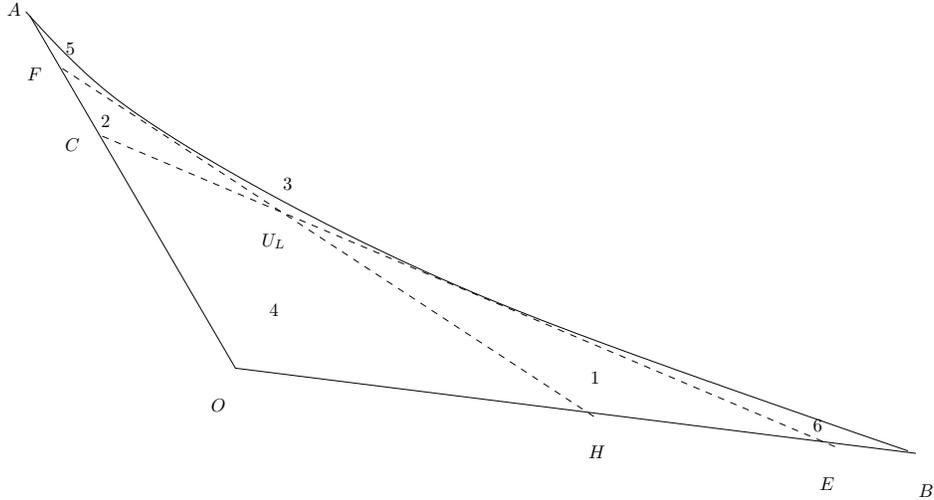}}
\caption{Rarefaction and Shock Curves, Regions}
\label{Figure2.2}
\end{center}
\end{figure}
\subsection{\textbf{Solutions with a Vacuum State}}\label{2.6}\
\\
We now observe that $y^2=4v$ is an invariant curve for (\ref{2.1.4}), and if $(v,y)(x,t)$ is a smooth solution on this curve then $v$ satisfies the equation
\begin{equation}
v_t-(\frac{2}{\sqrt{v}})_x=0.\label{2.6.1}
\end{equation}
Therefore, if $U_R$ is in Region 5 of Figure \ref{Figure2.2}, the solution consists of a 1-rarefaction from $U_L$ to $U_G,$ a rarefaction solution to (\ref{2.6.1}) from $U_G$ to a point $U_{AB}(U_R),$ and a 2-rarefaction from $U_{AB}(U_R)$ to $U_R,$ where $U_{AB}(U_R)$ is the point where $R_2(U_R)$ is tangent to $y^2=4v.$

In region 6, outside these five regions, no classical Riemann solution exists. In the rest of this paper, we show that a solution containing a singular shock can be constructed.
\section{\textbf{The Formal Construction of Singular Shocks}}\label{three}\
\\
This section begins the construction of singular solutions by examining a
self-similar approximation to (\ref{2.1.4}), which provides valuable insight in the GSPT analysis. This will become evident in Section 4.
\subsection{\textbf{Dafermos Regularization}} \label{formal}\
\\
We study systems that approximate (\ref{2.1.4})-(\ref{2.1.5}). Following Dafermos \cite{C.M.Dafermos}, Dafermos and DiPerna \cite{DafermosDiPerna}, and Keyfitz and Kranzer \cite{B.L.KeyfitzandH.C.Kranzer}, we analyze the regularization of $$U_t+F(U)_x=0$$ by a viscous term following Dafermos's approach:
\begin{equation}
\varepsilon t U_{xx}=U_t+F(U)_x.
\label{3.1.1}
\end{equation}
Using $\xi=\frac{x}{t},$ the initial value problem (\ref{3.1.1})-(\ref{2.1.5})
becomes a nonautonomous second-order ODE
\begin{equation}
\varepsilon \frac{d^2 U}{d \xi^2}=\left(DF(U)-\xi I\right)\frac{d U}{d \xi},
\label{3.1.2}
\end{equation}
with boundary conditions
\begin{equation}
U(-\infty)=U_L, \ \ \ U(+\infty)=U_R.
\label{3.1.3}
\end{equation}
Since in the region of interest there are no classical solutions, we seek solutions
that are not uniformly bounded in $\varepsilon$ for $\xi$ near some value $s$.
The following technique, motivated by
Keyfitz and Kranzer \cite{B.L.KeyfitzandH.C.Kranzer}, provides a formal solution.
We develop this and then show in Section 4, following Schecter \cite{Schecter},
that for sufficiently small $\varepsilon>0$, (\ref{3.1.2}) possesses solutions with the
qualitative behavior we predict in this section.

Let
\begin{equation}
U(\xi)=         \begin{pmatrix}
           v(\xi) \\ \\
           y(\xi)
         \end{pmatrix},
\label{3.1.4}
\end{equation}
with $$v(\xi)=\frac{\varepsilon^2 \tilde{u}_2(\frac{\xi-s}{\varepsilon^q})}{(\tilde{u}_1^{2/3}(\frac{\xi-s}{\varepsilon^q})+\varepsilon^{\beta_3})^{3/2}}-\varepsilon^{\beta_4}, \ \ \ y(\xi)=\frac{\varepsilon \tilde{u}_2^2(\frac{\xi-s}{\varepsilon^q})}{(\tilde{u}_1^{16/15}(\frac{\xi-s}{\varepsilon^q})+\varepsilon^{\beta_2})^{3/2}}-\varepsilon^{\beta_1},$$
where $\beta_1>1,$ $\beta_4>\frac{41}{15}$ (the values of $\beta_i,$ $i=1,\ldots,4$ are not unique and are chosen so as to ensure the desired behavior) and define $\eta=\frac{\xi-s}{\varepsilon^q}$.
Then (\ref{3.1.2}) becomes
\begin{equation}
\left\{
    \begin{array}{ll}
    \varepsilon^{3-q}\left(\dfrac{\tilde{u}_2}{(\tilde{u}_1^{2/3}+\varepsilon^{\beta_3})^{3/2}}\right)_{\eta\eta}\ \ &=-(\varepsilon^q \eta+s)\varepsilon^2\left(\dfrac{\tilde{u}_2}{(\tilde{u}_1^{2/3}+\varepsilon^{\beta_3})^{3/2}}\right)_{\eta}\\
    &+\varepsilon^{-1}
    \left(\dfrac{\left[{ \tilde{u}_2^2(\frac{\xi-s}{\varepsilon^q})}-\varepsilon^{\beta_1-1}(\tilde{u}_1^{16/15}(\frac{\xi-s}{\varepsilon^q})+\varepsilon^{\beta_2})^{3/2}\right](\tilde{u}_1^{2/3}(\frac{\xi-s}{\varepsilon^q})+\varepsilon^{\beta_3})^{3/2}}{\left[{ \tilde{u}_2(\frac{\xi-s}{\varepsilon^q})}-\varepsilon^{\beta_4-2}(\tilde{u}_1^{2/3}(\dfrac{\xi-s}{\varepsilon^q})+\varepsilon^{\beta_3})^{3/2}\right](\tilde{u}_1^{16/15}(\frac{\xi-s}{\varepsilon^q})+\varepsilon^{\beta_2})^{3/2}}\right)_{\eta}, \\ \\
\varepsilon^{2-q}\left(\dfrac{\tilde{u}_2^2}{(\tilde{u}_1^{16/15}+\varepsilon^{\beta_2})^{3/2}}\right)_{\eta\eta}&=-(\varepsilon^q \eta+s)\varepsilon\left(\dfrac{\tilde{u}_2^2}{(\tilde{u}_1^{16/15}+\varepsilon^{\beta_2})^{3/2}}\right)_\eta\\
&+\varepsilon^{-2}\left(\dfrac{(\tilde{u}_1^{2/3}(\frac{\xi-s}{\varepsilon^q})+\varepsilon^{\beta_3})^{3/2}}{{ \tilde{u}_2(\frac{\xi-s}{\varepsilon^q})}-\varepsilon^{\beta_4-2}(\tilde{u}_1^{2/3}(\frac{\xi-s}{\varepsilon^q})+\varepsilon^{\beta_3})^{3/2}}\right)_\eta.
    \end{array}
  \right.
\label{3.1.5}
\end{equation}
We balance at least two terms in each equation, so that nontrivial solutions can be found. Thus we set $3-q=-1$ in the first equation, and $2-q=-2$ in the second.
This gives $q=4$ and hence
\begin{equation}
\left\{
    \begin{array}{ll}
    \left(\frac{\tilde{u}_2}{(\tilde{u}_1^{2/3}+\varepsilon^{\beta_3})^{3/2}}\right)_{\eta\eta}\ \ &=-(\varepsilon^q \eta+s)\varepsilon^3\left(\frac{\tilde{u}_2}{(\tilde{u}_1^{2/3}+\varepsilon^{\beta_3})^{3/2}}\right)_{\eta}\\
    &+
    \left(\dfrac{\left[{ \tilde{u}_2^2(\frac{\xi-s}{\varepsilon^q})}-\varepsilon^{\beta_1-1}(\tilde{u}_1^{16/15}(\frac{\xi-s}{\varepsilon^q})+\varepsilon^{\beta_2})^{3/2}\right](\tilde{u}_1^{2/3}(\frac{\xi-s}{\varepsilon^q})+\varepsilon^{\beta_3})^{3/2}}{\left[{ \tilde{u}_2(\frac{\xi-s}{\varepsilon^q})}-\varepsilon^{\beta_4-2}(\tilde{u}_1^{2/3}(\dfrac{\xi-s}{\varepsilon^q})+\varepsilon^{\beta_3})^{3/2}\right](\tilde{u}_1^{16/15}(\frac{\xi-s}{\varepsilon^q})+\varepsilon^{\beta_2})^{3/2}}\right)_{\eta}, \\ \\
\left(\frac{\tilde{u}_2^2}{(\tilde{u}_1^{16/15}+\varepsilon^{\beta_2})^{3/2}}\right)_{\eta\eta}&=-(\varepsilon^q \eta+s)\varepsilon^3\left(\frac{\tilde{u}_2^2}{(\tilde{u}_1^{16/15}+\varepsilon^{\beta_2})^{3/2}}\right)_\eta\\
&+\left(\dfrac{(\tilde{u}_1^{2/3}(\frac{\xi-s}{\varepsilon^q})+\varepsilon^{\beta_3})^{3/2}}{{ \tilde{u}_2(\frac{\xi-s}{\varepsilon^q})}-\varepsilon^{\beta_4-2}(\tilde{u}_1^{2/3}(\frac{\xi-s}{\varepsilon^q})+\varepsilon^{\beta_3})^{3/2}}\right)_\eta.
    \end{array}
  \right.
\label{3.1.6}
\end{equation}
The singular region is narrower than a standard shock profile.

When we expand $\tilde{u}_1$,  $\tilde{u}_2$ as series in $\varepsilon$
$$\tilde{u}_1=y_1(\eta)+o(1), \ \ \tilde{u}_2=y_2(\eta)+o(1),$$
we obtain
\begin{equation}
\left\{
    \begin{array}{ll}
    \left(\frac{y_2}{y_1}\right)_{\eta\eta}\ \ \ =\left(\frac{y_2}{y_1^{3/5}}\right)_{\eta}, \\  \\
\left(\frac{y_2^2}{y_1^{8/5}}\right)_{\eta\eta}=\left(\frac{y_1}{y_2}\right)_\eta.
    \end{array}
  \right.
\label{3.1.7}
\end{equation}
We note that from (\ref{3.1.4}) we must have $y_1, y_2 \rightarrow 0$ as $|\eta|\rightarrow \infty,$ and $\dfrac{y_2}{y_1^{3/5}}, \dfrac{y_1}{y_2}\rightarrow 0$ as $\eta \rightarrow \infty.$ Assuming that the singular behavior is restricted to a neighborhood of $\xi=s$ we also have $\left(\dfrac{y_2}{y_1}\right)_\eta, \left(\dfrac{y_2^2}{y_1^{8/5}}\right)_{\eta} \rightarrow 0$ as $\eta \rightarrow \infty.$ We integrate (\ref{3.1.7}) once, and now focus attention on solutions of
\begin{equation}
\left\{
    \begin{array}{ll}
    \frac{d y_1}{d\eta}=\frac{5}{2}\left(\frac{y_1^{18/5}}{y_2^3}-2y_1^{7/5}\right),\\ \\
\frac{d y_2}{d\eta}=\frac{5}{2}\frac{y_1^{13/5}}{y_2^2}-4y_2y_1^{2/5},
    \end{array}
  \right.
\label{3.1.8}
\end{equation}
which approach $(0,0)$ as $|\eta|\rightarrow \infty$.

The phase portrait of the 2-dimensional system (\ref{3.1.8}) is shown in Figure \ref{Figure3.1}.
The origin is the unique equilibrium.
$y_2=2^{1/3}y_1^{11/15}$ is an invariant parabola (in $(v,y)$ coordinates this curve is $y^2=2v$). The line $y_2=0$ corresponds to the point $(0,0)$ in $(v,y)$ coordinates. The homoclinic orbits, which are of greatest interest
to us, are  solutions $(y_1(\eta),y_2(\eta))$ which can be determined
uniquely by setting $y_1(0)>0$, $y_2(0)>0.$
\begin{figure}
\psfrag{1}{$y_1$}
\psfrag{2}{$y_2$}
\psfrag{3}{$y_2=2^{1/3}y_1^{11/15}$}
\begin{center}
\scalebox{0.8}{
\includegraphics{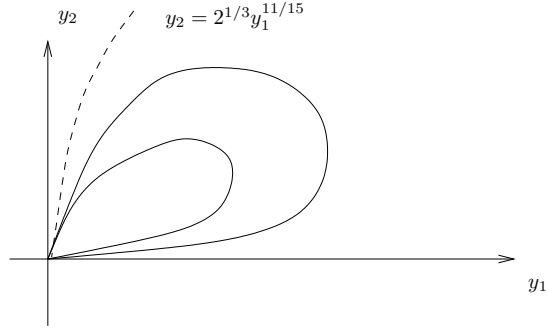}}
\caption{Integral Curves and Orbits of (\ref{3.1.8}).}
\label{Figure3.1}
\end{center}
\end{figure}
We will need to know the asymptotic behavior of $Y=(y_1,y_2)$ as
$|\eta|\rightarrow \infty$.
Writing
\begin{align}
y_1&=\frac{c}{|\eta|^p}+{\mathcal O}\left(\frac{1}{|\eta|^{p+1}}\right),\label{3.1.9}\\
y_2&=\frac{d}{|\eta|^r}+{\mathcal O}\left(\frac{1}{|\eta|^{r+1}}\right),
\label{3.1.10}
\end{align}
we substitute (\ref{3.1.9})-(\ref{3.1.10}) into (\ref{3.1.8}) and then solve for $c,$ $d,$ $p$ and $r$ to obtain
$$c=\left(\frac{2}{3}\right)^{5/2}, \ \ d=3^{1/3}\left(\frac{2}{3}\right)^{13/6}, \ \ r=\frac{11}{6}, \ \ p=\frac{5}{2}$$ as $\eta\rightarrow \infty.$ 
We also have
\begin{equation}
\label{be}
y_2\approx 2^{1/3}y_1^{11/15}
\end{equation}
as $Y\to 0$.
This describes the asymptotic behavior of $Y$ as $Y\to 0$.
Therefore the homoclinic orbits are tangent to the invariant parabola $y_2\approx 2^{1/3}y_1^{11/15}$ at one end. In addition we have $c=d=0$ as $\eta\rightarrow -\infty.$

The singular solution (\ref{3.1.4}), has its essential support in a layer of width $|\xi-s|=
O(\varepsilon^q)$ with $q>1,$ and tends to zero away from $\xi=s$.
As in Keyfitz and Kranzer \cite{B.L.KeyfitzandH.C.Kranzer} we embed the singular shock in
a shock profile of the usual type: a solution $\bar{U}(\tau)=\bar{U}(\frac{\xi-s}{\varepsilon})$
which is bounded in the layer $\varepsilon^{q}<|\xi-s|<\varepsilon$, has an expansion
\begin{equation}
\bar{U}=\bar{U}_0+o(1),
\label{3.1.11}
\end{equation}
and whose derivatives are $O(\varepsilon^{-1})$.

Writing (\ref{3.1.2}) in terms of $\tau=\frac{\xi-s}{\varepsilon}$ we have \begin{equation}
\frac{d}{d\tau}\left(\frac{d\bar{U}}{d \tau}-F(\bar{U})+s\bar{U}\right)=-\varepsilon \tau\frac{d \bar{U}}{d\tau}.
\label{3.1.12}
\end{equation}
Using the expansion (\ref{3.1.11}) we have
\begin{equation}
\frac{d}{d\tau}\left(\frac{d\bar{U}_0}{d \tau}-F(\bar{U}_0)+s\bar{U}_0\right)=0,
\label{3.1.13}
\end{equation}
in each separate interval $\tau<0$ or $\tau>0$ outside the boundary layer.
Hence, we may write
\begin{equation}
\frac{d\bar{U}_0}{d \tau}-F(\bar{U}_0)+s\bar{U}_0=k_{\mp}.
\label{3.1.14}
\end{equation}
On the other hand, integrating (\ref{3.1.12}) over an interval surrounding $\tau = 0$
(the boundary layer), we obtain
\begin{equation}
\left[\frac{d\bar{U}}{d \tau}-F(\bar{U})+s\bar{U}\right]_{\tau<0}^{\tau>0}=-\varepsilon \int_{\tau<0}^{\tau>0}\tau\frac{d\bar{U}}{d\tau}d\tau.
\label{3.1.15}
\end{equation}
Now, from \eqref{3.1.4}, $$\bar{U}(\tau)=
                     U(\xi),$$
   and we change the variable to $\eta = \tau/\varepsilon^3$
  in (\ref{3.1.15}), which yields
\begin{align}
k_{+}-k_{-}&=\lim_{\varepsilon\rightarrow 0}\left\{-\varepsilon \int\varepsilon^3 \eta\left(
                                                          \begin{array}{c}
                                                            \frac{dv}{d\eta} \\ \\
                                                            \frac{dy}{d\eta} \\
                                                          \end{array}
                                                        \right)d\eta \ \right\}=\lim_{\varepsilon\rightarrow 0}
    \begin{pmatrix}
      -\varepsilon^6 \int\eta \frac{d}{d\eta}\left(\frac{\tilde{u}_2}{(\tilde{u}_1^{2/3}+\varepsilon^{\beta_3})^{3/2}}\right) d\eta\\ \\
      -\varepsilon^{5} \int\eta \frac{d}{d\eta}\left(\frac{\tilde{u}_2^2}{(\tilde{u}_1^{16/15}+\varepsilon^{\beta_2})^{3/2}}\right) d\eta
    \end{pmatrix}\nonumber \\
&=\lim_{\varepsilon\rightarrow 0}
    \begin{pmatrix}
      -\varepsilon^6 \int\eta \frac{d}{d\eta}\left(\frac{y_2}{(y_1^{2/3}+\varepsilon^{\beta_3})^{3/2}}\right) d\eta\\ \\
      -\varepsilon^{5} \int\eta \frac{d}{d\eta}\left(\frac{y_2^2}{(y_1^{16/15}+\varepsilon^{\beta_2})^{3/2}}\right) d\eta
    \end{pmatrix}\nonumber\\
    &=\lim_{\varepsilon\rightarrow 0}
    \begin{pmatrix}
      -\varepsilon^6 \int_{\text{finite} \ \eta}\eta \frac{d}{d\eta}\left(\frac{y_2}{(y_1^{2/3}+\varepsilon^{\beta_3})^{3/2}}\right) d\eta-\varepsilon^6 \int_{\text{infinite} \ \eta}\eta \frac{d}{d\eta}\left(\frac{y_2}{(y_1^{2/3}+\varepsilon^{\beta_3})^{3/2}}\right) d\eta\\ \\
      -\varepsilon^{5} \int_{\text{finite} \ \eta}\eta \frac{d}{d\eta}\left(\frac{y_2^2}{(y_1^{16/15}+\varepsilon^{\beta_2})^{3/2}}\right) d\eta-\varepsilon^{5} \int_{\text{infinite} \ \eta}\eta \frac{d}{d\eta}\left(\frac{y_2^2}{(y_1^{16/15}+\varepsilon^{\beta_2})^{3/2}}\right) d\eta
    \end{pmatrix}.
\label{3.1.16}
\end{align}
When $\eta$ is finite we notice that for values of $y_1$ and $y_2$ away from the origin $y$ and $v$ are close to zero, therefore we can focus on the case of $y_1, y_2\rightarrow 0.$ If $v\rightarrow 0$ and $y\rightarrow \infty$ then $\varepsilon^4 v\rightarrow 0.$ If $v, y\rightarrow \infty$ then by (\ref{3.1.7}) $$\frac{d}{d\eta}\left(\frac{y_2^2}{(y_1^{16/15}+\varepsilon^{\beta_2})^{3/2}}\right)=\frac{\varepsilon^2}{v+\varepsilon^{\beta_4}}.$$ In addition $v\simeq \varepsilon^k\sqrt{y}$ where $-1<k<2.5$ and $$\varepsilon^6\frac{d}{d\eta}\left(\frac{y_2}{(y_1^{2/3}+\varepsilon^{\beta_3})^{3/2}}\right)=\varepsilon^7 \frac{y}{v}=\frac{\varepsilon^6}{\varepsilon^2}\frac{dv}{d\eta}\approx\frac{\varepsilon^4\varepsilon^k}{2\sqrt{y}}\frac{dy}{d\eta}=\frac{\varepsilon^5\varepsilon^k}{2\sqrt{y}}\frac{d}{d\eta}\left(\frac{y_2^2}{(y_1^{16/15}+\varepsilon^{\beta_2})^{3/2}}\right)$$
so either after short calculations or integration by parts all cases give $$\lim_{\varepsilon\rightarrow 0} \varepsilon^6\int_{\text{finite} \ \eta}\eta \frac{d}{d\eta}\left(\frac{y_2}{(y_1^{2/3}+\varepsilon^{\beta_3})^{3/2}}\right) d\eta=0.$$
The interesting behavior which will give us the generalized Rankine-Hugoniot condition emerges as $\eta \rightarrow \infty.$ We use (\ref{3.1.9})-(\ref{3.1.10}), ignoring the constants $c$ and $d,$ without loss of generality, and letting $$\frac{1}{\eta^{5/3}}=\varepsilon^{\beta_3} \tan^2\theta$$ to get
\begin{align*}
-\varepsilon^6 \int_{\text{infinite} \ \eta}\eta \frac{d}{d\eta}\left(\frac{y_2}{(y_1^{2/3}+\varepsilon^{\beta_3})^{3/2}}\right) d\eta&=-\varepsilon^6\frac{\frac{\eta}{\eta^{11/6}}}{\left(\frac{1}{\eta^{5/3}}+\varepsilon^{\beta_3}\right)^{3/2}}\big|_{\text{infinite} \ \eta}+\varepsilon^6\int_{\text{infinite} \ \eta} \frac{\frac{1}{\eta^{11/6}}}{\left(\frac{1}{\eta^{5/3}}+\varepsilon^{\beta_3}\right)^{3/2}} \ d\eta\\
&=\varepsilon^{6-\beta_3}\sin \theta_0 \cos^2 \theta_0+ \frac{6}{5}\cdot\varepsilon^{6-\beta_3}\int_0^{\theta_0} \cos \theta \ d\theta\simeq\varepsilon^{6-\beta_3},
\end{align*}
for some $\theta_0.$
On the other hand, if we let $$\frac{1}{\eta^{8/3}}=\varepsilon^{\beta_2}\tan^2 \theta$$ we get
\begin{align*}
-\varepsilon^{5} \int_{\text{infinite} \ \eta}\eta \frac{d}{d\eta}\left(\frac{y_2^2}{(y_1^{16/15}+\varepsilon^{\beta_2})^{3/2}}\right) d\eta&=-\varepsilon^5 \frac{\frac{\eta}{\eta^{11/3}}}{\left(\frac{1}{\eta^{8/3}}+\varepsilon^{\beta_2}\right)^{3/2}} \big|_{\text{infinite} \ \eta}+
\varepsilon^5\int_{\text{infinite} \ \eta} \frac{\frac{1}{\eta^{11/3}}}{\left(\frac{1}{\eta^{8/3}}+\varepsilon^{\beta_2}\right)^{3/2}} \ d\eta\\
&=\varepsilon^{5-\frac{\beta_2}{2}}\sin^2\theta_1\cos \theta_1+\frac{3}{4}\cdot \varepsilon^{5-\frac{\beta_2}{2}}\int_0^{\theta_1}\sin \theta \ d\theta\simeq \varepsilon^{5-\frac{\beta_2}{2}},
\end{align*}
for some $\theta_1.$
\subsection{\textbf{Possible Cases.}}\
\begin{enumerate}
  \item If $\beta_3=6,$ $\beta_2<10$ then
\begin{equation}
k_{+}-k_{-}=\left(
  \begin{array}{c}
    \ast \\
    0 \\
  \end{array}
\right).
\nonumber
\end{equation}
By (\ref{3.1.3}), we have $\bar{U}_0(-\infty)=U_L$,
$ \bar{U}_0(+\infty)=U_R$ and $\frac{d\bar{U}_0}{d\tau}(\pm\infty)=0$.
 Therefore, from (\ref{3.1.14}) we get the generalized Rankine-Hugoniot condition for singular shocks:
\begin{align}
s_{\text{singular}}(U_L,U_R)&=s=\frac{F_2(U_L)-F_2(U_R)}{y_{L}-y_{R}}, \label{3.2.1} \\
0<k&=F_1(U_L)-F_1(U_R)-s(v_{L}-v_{R}).
\label{3.2.2}
\end{align}
We notice that we have a deficit in the first component. This does not agree with Mazzotti \cite{Mazzotti2}. In addition if we check for the overcompressive region $$\lambda_2(v,y)<s<\lambda_1(v_L,y_L)$$ we see that region 6 is overcompressive but the slope of the curve $s=\lambda_1(v_L,y_L)$ is negative. Therefore the region does not look like the required one, which should cover all possible solutions of the Riemann problem in the plane.
  \item If $\beta_3<6,$ $\beta_2<10$ then we get the Rankine-Hugoniot confition for both components but this does not give a singular shock.
  \item If $\beta_3=6,$ $\beta_2=10$ then
  \begin{equation}
k_{+}-k_{-}=\left(
  \begin{array}{c}
    \ast \\
    \ast \\
  \end{array}
\right).
\nonumber
\end{equation}
This means we have a deficit for both components.  As $\eta \rightarrow \infty$ the solution (\ref{3.1.4}) in this case behaves like $$v=\frac{\varepsilon^2 \cdot \frac{1}{\eta^{11/6}}}{\left(\frac{1}{\eta^{5/3}}+\varepsilon^6\right)^{3/2}}-\varepsilon^{\beta_4}, \ \ y=\frac{\varepsilon \cdot \frac{1}{\eta^{11/3}}}{\left(\frac{1}{\eta^{8/3}}+\varepsilon^{10}\right)^{3/2}}-\varepsilon^{\beta_1}.$$
Let $$\frac{1}{\eta^{1/3}}= \tan\theta,$$ then as $\theta\rightarrow 0$
$$v=\frac{\varepsilon^2\tan^{11/2} \theta}{(\tan^5\theta+\varepsilon^6)^{3/2}}-\varepsilon^{\beta_4}, \ \ y=\frac{\varepsilon\tan^{11}\theta}{(\tan^8 \theta+\varepsilon^{10})^{3/2}}-\varepsilon^{\beta_1}.$$ As $\varepsilon\rightarrow 0$ one should expect $y-v$ to have a bigger maximum value than $v-y$ (as we will see in Figure \ref{Figure4.3}). However, this is not the case here. In addition this would not agree with Mazzotti \cite{Mazzotti2}.
\item If $\beta_3\leqslant 5,$ $\beta_2=10$ then to see how this solution behaves for small $\varepsilon$
as $\eta\rightarrow\infty$ we may let $$\frac{1}{\eta^{1/3}}=\varepsilon \tan\theta,$$ 
$$v=\frac{\tan^{11/2} \theta}{(\tan^5\theta+1)^{3/2}}-\varepsilon^{\beta_4}.$$
$v$ remains bounded but since $y$ is unbounded one should expect $v$ to be unbounded as well by (\ref{be}). 
\item If $5<\beta_3<6,$ $\beta_2=10$ then 
$$k_{+}-k_{-}=\left(
                        \begin{array}{c}
                          0 \\
                          k \\
                        \end{array}
                      \right),$$
where $$k=-\lim_{\varepsilon\rightarrow 0} \varepsilon^{5} \int\eta \frac{d}{d\eta}\left(\frac{y_2^2}{(y_1^{16/15}+\varepsilon^{\beta_2})^{3/2}}\right) d\eta,$$ defined uniquely by each orbit.
Finally, from (\ref{3.1.14}) we get the generalized Rankine-Hugoniot condition for singular shocks:
\begin{align}
s_{\text{singular}}(U_L,U_R)&=s=\frac{F_1(U_L)-F_1(U_R)}{v_{L}-v_{R}}, \label{3.2.3} \\
0<k&=F_2(U_L)-F_2(U_R)-s(y_{L}-y_{R}).
\label{3.2.4}
\end{align}
The restriction on the sign of $k$ is consistent with having $U_R$ in region 6
with respect to $U_L$.

We now introduce two curves, as shown in Figure \ref{Figure3.2}, namely $J_5$
and $J_6$ determined by
$$s_{\text{singular}}(U_L,U)=\lambda_1(U_L)$$ and
$$s_{\text{singular}}(U_L,U)=\lambda_2(U),$$ respectively.
We find
\begin{align}
J_5: y=\frac{y_L}{v_L}v+v(v-v_L)\cdot (\frac{-y_L-\sqrt{y_L^2-4v_L}}{2v_L^2})
\label{3.2.5}
\end{align}
The curve
$J_5$ passes through the point $U_L$ and intersects $y^2=4v$ at a point $U_D.$
The second curve is $J_6$, given by
\begin{align}
J_6: y=\frac{vy_L(2v-v_L)+v^2y_L}{2v_L(2v-v_L)}+\frac{(v-v_L)}{2v_L(2v-v_L)}\sqrt{\left( v y_L-4\frac{v_L^2}{y_L}\right)^2+4v_L^3\frac{(y_L^2-4v_L)}{y_L^2}}
\label{3.2.6}
\end{align}
The curve $J_6$ passes through the point $U_L$ and does not intersect $y^2=4v.$ $\beta_3$ is chosen such that $y$ is unbounded as $y_1, y_2\rightarrow 0$ and $v$ passes from a neighborhood of $0$ (where the variables $u_1,$ $u_2$ of (\ref{2.1.1}) become singular) before becoming unbounded. In addition $y-v$ has a bigger maximum value than $v-y.$ 
\end{enumerate}
\begin{figure}
\psfrag{1}{$O$}
\psfrag{2}{$A$}
\psfrag{3}{$B$}
\psfrag{4}{$C$}
\psfrag{11}{$D$}
\psfrag{8}{$U_L$}
\psfrag{6}{$E$}
\psfrag{12}{$J_6$}
\psfrag{13}{$J_5$}
\begin{center}
\scalebox{0.7}{
\includegraphics{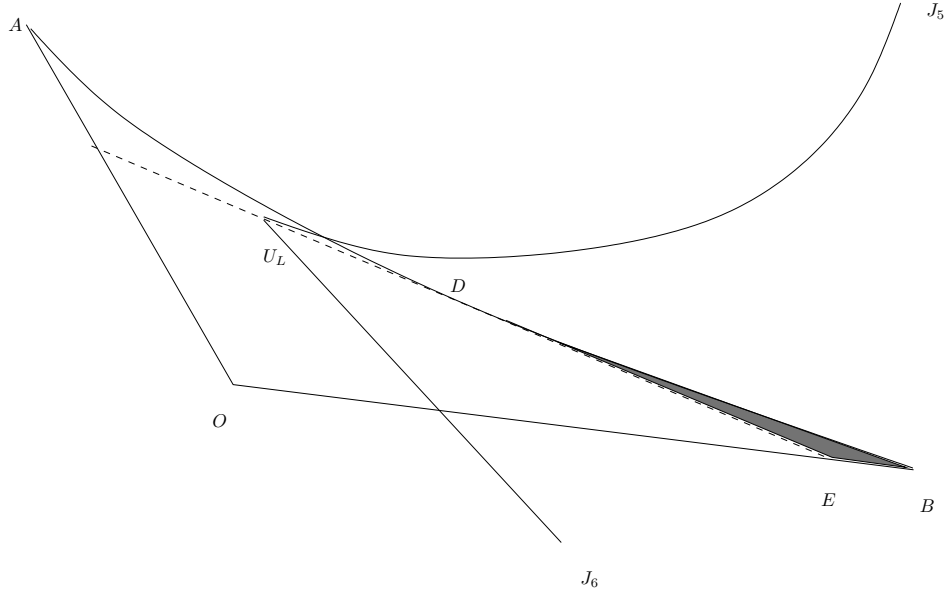}}
\caption{Regions of Singular Shocks, Additional Curves.}
\label{Figure3.2}
\end{center}
\end{figure}
We conclude that the forth case agrees with Mazzotti \cite{Mazzotti2} whereas all other cases fail.

In the remainder of this paper we  show existence of  self-similar singular shock
solutions to (\ref{3.1.1}).
Our main result is the following theorem.
\begin{theorem} \label{thm3.1}
In the system of conservation laws (\ref{2.1.4}) with Riemann data (\ref{2.1.5}),
assume that $U_R$ is in the interior of region 6 with respect to $U_L$, so that with
\begin{equation}
s_{\text{singular}}(U_L,U_R)\equiv
\frac{F_1(U_L)-F_1(U_R)}{v_{L}-v_{R}}, \label{3.2.7}
\end{equation}
we have
\begin{equation}
0<k=F_2(U_L)-F_2(U_R)-s_{\text{singular}}(y_{L}-y_{R})\,,
\label{3.2.8}
\end{equation}
and the strict inequalities
\begin{enumerate}
  \item $s_{\text{singular}}(U_L,U_R)<\lambda_1(U_L)$
  \item $\lambda_2(U_R)<s_{\text{singular}}(U_L,U_R)$
\end{enumerate}
hold.
Then there exists a singular shock connecting $U_L$ and $U_R$ passing from points very close to the $y$-axis (thus the chromatography model (\ref{2.1.1}) exhibits singular shocks); that is, a solution
$U_\varepsilon$ of (\ref{3.1.2})-(\ref{3.1.3}) which becomes
unbounded as $\varepsilon\rightarrow 0$.
\end{theorem}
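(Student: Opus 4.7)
The plan is to construct $U_\varepsilon$ by a geometric singular perturbation argument along the lines of Schecter \cite{Schecter} and Keyfitz-Tsikkou \cite{KT}, with a dedicated blow-up to resolve the loss of normal hyperbolicity at the singularity. First, I would recast (\ref{3.1.2}) as an autonomous first-order system in $\mathbb{R}^4$ by setting $W=dU/d\xi$, appending $\dot\xi=1$, and treating $\varepsilon$ as a parameter to be sent to zero. Two natural limits then appear: the outer (slow) system, whose reduced $\varepsilon=0$ flow forces $W$ onto a slow manifold on which $U$ is essentially constant; and the inner (fast) systems in $\tau=(\xi-s)/\varepsilon$ (standard viscous-profile scale) and $\eta=(\xi-s)/\varepsilon^4$ (the singular blow-up scale identified in Section 3.1). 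The candidate trajectory is therefore $U\equiv U_L$ on the slow manifold for $\xi<s$, a boundary-layer jump in $\tau$, a faster excursion in $\eta$ along a homoclinic orbit of the planar limiting system (\ref{3.1.8}), another $\tau$-layer, and $U\equiv U_R$ for $\xi>s$.

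Next, the heart of the analysis is to patch these pieces together. The planar limit (\ref{3.1.8}) has the origin as its unique rest point, and the homoclinic orbits found in Section 3.1 (with the asymptotics (\ref{3.1.9})-(\ref{3.1.10}) as $\eta\to+\infty$ and tangency to the invariant parabola $y_2\sim 2^{1/3}y_1^{11/15}$ as $\eta\to-\infty$) provide the singular skeleton that carries the trajectory through the spike. The generalized Rankine-Hugoniot condition (\ref{3.2.3})-(\ref{3.2.4}) fixes the correct speed $s$ and encodes the fact that only the first component of the usual jump is satisfied, the positive deficit $k>0$ being precisely the ``lost'' mass of $y$ carried by the singular spike; the hypotheses of the theorem place $U_R$ in region 6 in exactly the configuration for which this matching is geometrically consistent.

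The main obstacle is the blow-up analysis at the non-hyperbolic endpoints of the inner orbit, where $(y_1,y_2)\to(0,0)$ corresponds to $v$ becoming unbounded in the original variables, so that the rest points sitting over $U_L$ and $U_R$ on the outer slow manifold fail to be normally hyperbolic for the joint fast-slow flow. Following Krupa-Szmolyan \cite{KS} and Schecter-Szmolyan \cite{SS}, I would introduce a weighted spherical blow-up of these degenerate points and analyze the lifted dynamics chart by chart, paying particular attention to the chart that glues the $\tau$-scale viscous layer to the $\eta$-scale singular excursion. The known decay rates $p=5/2$, $r=11/6$ together with the tangency to the parabola in (\ref{be}) should then let me verify that the unstable manifold of the left rest point enters, and the stable manifold of the right rest point exits, the blown-up locus transversally to the relevant center-like invariant objects.

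Finally, once the singular orbit is assembled and shown to be transverse in the blown-up space, an exchange-lemma and Fenichel-persistence argument yields that for all sufficiently small $\varepsilon>0$ the stable manifold of the right rest point and the unstable manifold of the left rest point in the full four-dimensional flow continue to intersect, providing the heteroclinic trajectory which is the desired $U_\varepsilon$ solving (\ref{3.1.2})-(\ref{3.1.3}). The dimension count needed for transverse intersection is supplied by the two strict inequalities of the theorem: $s_{\text{singular}}<\lambda_1(U_L)$ makes both characteristic directions at $U_L$ unstable for the fast flow, while $\lambda_2(U_R)<s_{\text{singular}}$ makes both characteristic directions at $U_R$ stable, producing the overcompressive balance of stable and unstable dimensions. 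The ansatz (\ref{3.1.4}) then forces $U_\varepsilon(s)$ to diverge as $\varepsilon\to 0$, completing the proof that (\ref{2.1.4}), and hence the chromatography system (\ref{2.1.1}), supports singular shocks in the interior of region 6.
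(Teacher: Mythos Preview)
Your proposal is correct and follows essentially the same route as the paper: recast \eqref{3.1.2} as an autonomous fast--slow system, identify the outer equilibria $U_L,U_R$ on normally hyperbolic pieces of the slow manifold, match them through the homoclinic orbit of \eqref{3.1.8} via a weighted blow-up of the degenerate set where $(y_1,y_2,\varepsilon)\to 0$, and use overcompressibility to make the dimension count work. The paper's execution differs only in details you would discover on implementation: it replaces your $W=dU/d\xi$ by $W=-\varepsilon\,dU/d\xi+F(U)-\xi U$ so that the Rankine--Hugoniot deficit $k$ appears directly as a drift in $w_2$; the blow-up is done with weights $(1,\tfrac{11}{15},1)$ on $(y_1,y_2,\varepsilon)$ and analyzed in a single ``Chart~2'' with coordinates $a=y_2^{15/11}/y_1$, $r=y_2^{15/11}$, $b=\varepsilon/y_2^{15/11}$, which produces two \emph{corner} equilibria $P_3$ (at $a=0$) and $P_2$ (at $a=2^{5/11}$) serving as your intermediate matching points $q_L,q_R$; and because these are corner equilibria (normal hyperbolicity splits between the $r$- and $b$-directions), the persistence step uses Schecter's Corner Lemma rather than the standard Exchange Lemma.
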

\subsection{\textbf{Remarks.}}\
\\
Since we are only interested in the curvilinear triangle $OAB,$ proving existence of a self-similar approximate solution in region 6 -- in which the Riemann problem is solved by a strictly overcompressive singular shock alone -- completes the list of solutions in regions 1-5, given in Sections \ref{2.5} and \ref{2.6}.
 
In Section \ref{four}, we prove Theorem \ref{thm3.1} by showing existence of solutions
to equations \eqref{3.1.2} and \eqref{3.1.3} for small $\varepsilon$.
We use the approach of Schecter \cite{Schecter}, which proceeds by modifying
GSP theory \cite{F,J} to take into account that normal hyperbolicity fails
in parts of the construction.
A method for handling loss of normal hyperbolicity, known as ``blowing up",
was developed by Krupa and Szmolyan,
\cite{KS}, and applied by Schecter. Strict overcompressibility is needed, as will be seen, to carry out the construction.
\section{\textbf{Existence of Approximations to Singular Solutions}}
\label{four}\ \\
We use GSPT to prove Theorem \ref{thm3.1} by showing that self-similar regularized solutions exist for sufficiently small $\varepsilon>0.$ The approach was laid out by Schecter \cite{Schecter} and was also employed by Keyfitz and Tsikkou \cite{KT}.

Basically, the situation described in Section \ref{three} consist of an ``outer'' part (which includes the two constant states $U_L$ and $U_R$) and an ``inner'' part (the scaled homoclinic orbit) with no indication how to connect them. The treatment following \eqref{3.1.11} did not prove that a solution exists, but just simply suggests a mechanism whereby the two parts of the solution could be connected. This is corrected by Geometric singular perturbation theory (GSPT), using the theory of
dynamical systems to prove that smooth systems, under the appropriate nondegeneracy conditions,
do possess connecting orbits, and even that these orbits are unique.
GSPT was developed by Fenichel \cite{F} (see also the exposition by Jones
\cite{J}), and despite many efforts, points where normal hyperbolicity breaks down, as in our case, remained a major obstacle to the geometric theory. (A flow is normally hyperbolic with respect to an invariant manifold if any manifold transverse to the flow can be factored into stable and unstable directions. More precisely, if the system is linearized at a point on the invariant manifold, then only the eigenvalues with eigenvectors tangent to the invariant manifold have zero real part.) Krupa and Szmolyan \cite{KS} applied their technique of ``blowing up'' to some
examples, but it was Schecter who showed how it could also apply to the system
\eqref{3.1.2}. The insight of GSPT is that one can study the systems when $\varepsilon=0$ and then piece the information together to prove the existence of a genuine orbit when $\varepsilon >0$. Within the framework of GSPT, and following Schecter, we find a way to connect the homoclinic orbit produced in the previous section with the skeleton that joins $U_L$ and $U_R.$

The objective of this section is to apply the theory of dynamical systems to prove existence of an orbit when $\varepsilon>0.$ The important tool is the {\em Exchange Lemma\/} of Jones and Kopell \cite {JK}, and an extension called by
Schecter \cite{Schecter} the {\em Corner Lemma\/}. GSPT approach replaces a dynamical problem, here
\eqref{3.1.2} and \eqref{3.1.3}, in which a singular limit occurs, with a higher-dimensional
dynamical system in which $\{\varepsilon = 0\}$ is merely a subspace, and behavior
near that subspace can be determined by continuity if the hypotheses of the Exchange
and Corner Lemmas are satisfied. We will describe the pieces of the solution in the singular limit and verify the nondegeneracy hypotheses needed to carry out the perturbation. As could be seen already in Section \ref{formal}, some rescaling of the variables is needed to exhibit any of the dynamics on the fast time scale. In addition, the technique of ``blowing up'' which involves a change of variables to desingularize the invariant manifold will be used to reveal essential information about the flow and gain additional hyperbolicity. 
\subsection{\textbf{Creating the Dynamical Problem}}\
\\
We start from (\ref{3.1.2})-(\ref{3.1.3}) introducing  $V=\left(
                        \begin{array}{c}
                          v_1 \\
                          v_2 \\
                        \end{array}
                      \right)=\varepsilon \frac{dU}{d\xi}$, and
$\theta=\xi-s_{\text{singular}}.$ It is also convenient to treat $\xi$ as a state variable. This increases the dimension, but yields an
autonomous system.
Therefore the problem in the original self-similar variable (the slow time $\theta$) is
\begin{equation}
\begin{aligned}
\varepsilon\frac{dv}{d\theta}&=v_1, \\ \varepsilon\frac{dy}{d\theta}&=v_2, \\
\varepsilon\frac{dv_1}{d\theta}&=\frac{v_2}{v}-\frac{y v_1}{v^2}-\xi v_1, \\ \varepsilon\frac{dv_2}{d\theta}&=-\frac{v_1}{v^2}-\xi v_2, \\
\frac{d\xi}{d\theta}&=1.
\end{aligned}
\label{4.1.1}
\end{equation}
As written, this is singular as $\varepsilon \to 0$.
Replacing $\theta$ with $\tau$, where
$\theta=\varepsilon \tau$, we will work in the fast time system
\begin{equation}
\begin{aligned}
\frac{dv}{d\tau}&=v_1, \\ \frac{dy}{d\tau}&=v_2, \\
\frac{dv_1}{d\tau}&=\frac{v_2}{v}-\frac{yv_1}{v^2}-\xi v_1, \\ \frac{dv_2}{d\tau}&=-\frac{v_1}{v^2}-\xi v_2, \\
\frac{d\xi}{d\tau}&=\varepsilon,
\end{aligned}
\label{4.1.2}
\end{equation}
We note that in this problem ``slow'' and ``fast'' do {\em not} correspond to
``outer'' and ``inner''.
In fact, we will need an inner, faster time variable ($\eta =\tau/\varepsilon^3$) to describe the inner solution, as done formally in the previous section.

The boundary conditions are
\begin{equation}
(U, V, \xi)(-\infty)=(U_L, 0, -\infty), \quad (U, V, \xi)(+\infty)=(U_R, 0, +\infty).
\label{4.1.3}
\end{equation}
We now let $\varepsilon=0$ in (\ref{4.1.2}),
noting that \eqref{4.1.2} is now a regularly perturbed problem.
With $\xi = \textit{const.}$ for all solutions, the states $V=0$ are all
equilibria, and they are the only equilibria.

Using the eigenvalues (\ref{2.2.2})-(\ref{2.2.3}) we identify two subsets of $S$:
For $\delta>0$, we define $3$-dimensional manifolds
\begin{equation*}
\begin{aligned}
S_0&=\{(U,V,\xi): \|U\|\leq\frac{1}{\delta}, \ \ V=0, \ \ \text{and} \ \ \xi\leq \lambda_1(U)-\delta\}, \\
S_2&=\{(U,V,\xi): \|U\|\leq\frac{1}{\delta}, \ \ V=0,  \ \ \text{and} \ \ \lambda_2(U)+\delta\leq\xi\}.
\end{aligned}
\end{equation*}
which are normally hyperbolic
since the lines $\xi=\lambda_1(U), \ \ \xi=\lambda_2(U)$ are not included in the sets $S_i$.
In fact, if we linearize (\ref{4.1.2}) and set $\varepsilon=0, \ \ V=0$,
 there are $3$ eigenvalues of zero, with a full set of  eigenvectors in the space of equilibria.
 The remaining eigenvalues, $-\xi+\lambda_1(U)$ and $-\xi+\lambda_2(U)$,
 are real and nonzero.
 In $S_0$, both are positive, so there is  an unstable manifold of dimension $2$;
 and in $S_2$  a stable manifold of dimension $2$.
The boundary value $(U_L,0,-\infty)$ is an $\alpha$-limit of points in $S_0$, and
$(U_R,0,+\infty)$ an $\omega$-limit in $S_2$.

By Fenichel \cite{F}, and as stated in Schecter \cite{Schecter},
a system with normally hyperbolic manifolds of equilibria has perturbed
normally hyperbolic invariant manifolds nearby.
That is the case here: For $\varepsilon>0$ and near $0$, by Fenichel theory \cite{F},
the system (\ref{4.1.2}) has normally hyperbolic invariant manifolds near
each $S_i$. Since the $3$-dimensional space $S\equiv \{(U,V,\xi): V=0\}$ is invariant under (\ref{4.1.2}) for every $\varepsilon,$ the perturbed manifolds may be taken to be the $S_i$ themselves.

For a given $U_L$, we define the $1$-dimensional invariant set
\begin{equation}
S_0(U_L)=\{(U,V,\xi): U=U_L, \ \ V=0, \ \ \xi< \lambda_1(U_L)\}\,.
\nonumber
\end{equation}
The line $S_0(U_L)$ possesses a $3$-dimensional unstable manifold
$W_{\varepsilon}^{u}(S_0(U_L))$, the perturbation of
\begin{equation}
W_{0}^{u}(S_0(U_L))=\{(U,V,\xi): U\in\Omega_{\xi}, \ \ V=V(U), \ \ \xi< \lambda_1(U_L)\},
\nonumber
\end{equation}
where $\Omega_{\xi}$ is an open subset of $U$-space that depends on $\xi$ and $U_L$.
(The linearization of $W_0^u$ at a point in $S_0$ has a basis of eigenvectors, but
we can ignore them for now, noting only that the projection of $W_0^u$ onto $U$-space
contains a full neighborhood of $U_L$.
The function $V(U)$ is determined by solving the system \eqref{4.1.2}.)
Similarly,
\begin{equation}
S_2(U_R)=\{(U,V,\xi): U=U_R, \ \ V=0, \ \ \lambda_2(U_R)<\xi\}
\nonumber
\end{equation}
is a $1$-dimensional set, which has a $3$-dimensional stable manifold,
$W_{\varepsilon}^{s}(S_2(U_R))$, the perturbation of
\begin{equation}
W_{0}^{s}(S_2(U_R))=\{(U,V,\xi): U\in\Omega_{\xi}, \ \ V=V(U), \ \ \lambda_2(U_R)<\xi\}\,.
\nonumber
\end{equation}

Since every trajectory in $W_{\varepsilon}^u(S_0(U_L))\cap W_{\varepsilon}^s(S_2(U_R))$
tends to $U_R$ as $\tau \to \infty$ and to $U_L$ as $\tau \to -\infty$, our objective is to
show that these two $3$-dimensional manifolds intersect in the $5$-dimensional state space.

As an alternative for the same purpose, we focus attention on the shock layer, and specifically on the difficulties surrounding
the Rankine-Hugoniot relation, which normally is derived from equations \eqref{3.1.13}
and \eqref{3.1.15}, and replace $V$ in \eqref{4.1.2} by
$$ W= -V+F(U)-\xi U\,.$$
Also, from now on we treat $\varepsilon$ as a dynamical variable. Then we have the system
\begin{equation}
\begin{aligned}
\frac{dv}{d\tau}&=\frac{y}{v}-\xi v-w_1, \\
\frac{dy}{d\tau}&= \frac{1}{v}-\xi y-w_2, \\
\frac{dw_1}{d\tau}&=-\varepsilon v, \\
\frac{dw_2}{d\tau}&=-\varepsilon y, \\
\frac{d\xi}{d\tau}&=\varepsilon, \\
\frac{d\varepsilon}{d\tau}&=0.
\end{aligned}
\label{4.1.4}
\end{equation}

Each subspace $\varepsilon=$constant is invariant. Corresponding to the $3$-dimensional subsets $S_0$ and $S_2$ we have  now $4$-dimensional normally hyperbolic subsets which we write as
\begin{equation*}
\begin{aligned}
T_0&=\{(U,W,\xi, \varepsilon): \ \ \|U\|\leq\frac{1}{\delta}, \ \  W=F(U)-\xi U,
 \xi\leq \lambda_1(U)-\delta\},\\
T_2&=\{(U,W,\xi, \varepsilon): \ \ \|U\|\leq\frac{1}{\delta}, \ \ W=F(U)-\xi U,
 \lambda_2(U)+\delta\leq\xi\}\,.
\end{aligned}
\end{equation*}
The $1$-dimensional sets $S_0(U_L)$ and $S_2(U_R)$ are now
\begin{equation*}
\begin{aligned}
T_0^{\varepsilon}(U_L)&=\{(U,W,\xi,\varepsilon):  \ U=U_L, W=F(U_L)-\xi U_L,
 \xi\leq \lambda_1(U_L)-\delta,  \ \varepsilon  \ \text{fixed} \},\\
T_2^{\varepsilon}(U_R)&=\{(U,W,\xi,\varepsilon):  \ U=U_R, W=F(U_R)-\xi U_R,
 \xi\geq \lambda_2(U_R)+\delta, \ \varepsilon  \ \text{fixed} \},
\end{aligned}
\end{equation*}
and we rewrite the $3$-dimensional unstable manifold $W_\varepsilon^u(S_0(U_L))$ as
\begin{equation*}
W^{u}(T_0^{\varepsilon}(U_L))=\{(U,W, \xi, \varepsilon): \ \
U\in \Omega_{\xi}, \ \ W=W(U), \ \xi<\lambda_1(U_L), \ \ \varepsilon \ \ \text{fixed}\},
\end{equation*}
where now $W(U)$ denotes the solution of \eqref{4.1.4} corresponding to $U$.
Finally, the $3$-dimensional stable manifold $W^s_\varepsilon(S_2(U_R))$ becomes
a $3$-dimensional space
\begin{equation*}
W^{s}(T_2^{\varepsilon}(U_R))=\{(U,W,\xi, \varepsilon):
U\in\Omega_{\xi}, \ \ W=W(U), \ \lambda_2(U_R)<\xi, \ \ \varepsilon \ \ \text{fixed} \}\,.
\end{equation*}
As with the previous coordinates,
we look for a solution for fixed $\varepsilon>0$ that lies in the intersection of
$W^u(T_0^{\varepsilon}(U_L))$ and $W^s(T_2^{\varepsilon}(U_R))$.

Now we write down an expression for the inner solution,
motivated by the formal derivation given in Section \ref{formal}. The scaling \eqref{3.1.4} introduces a new variable
$Y=\left(
     \begin{array}{c}
       y_1 \\
       y_2 \\
     \end{array}
   \right)$
such that $$v=\frac{\varepsilon^2 y_2}{(y_1^{2/3}+\varepsilon^{\beta_3})^{3/2}}-\varepsilon^{\beta_4}, \ \ \ y=\frac{\varepsilon y_2^2}{(y_1^{16/15}+\varepsilon^{10})^{3/2}}-\varepsilon^{\beta_1}.$$
The system, with a time variable
$\eta = \tau/\varepsilon^3$  is now
\begin{equation}
\begin{aligned}
\frac{dy_1}{d\eta}&=\frac{5\varepsilon A(y_1, y_2, w_1, w_2, \xi, \varepsilon)}{2(4\varepsilon^{\beta_3}y_2 y_1^{6/15}-5\varepsilon^{10}y_2-y_2y_1^{16/15})}, \\
\frac{dy_2}{d\eta}&=\frac{\varepsilon B(y_1, y_2, w_1, w_2, \xi, \epsilon)}{(4\varepsilon^{\beta_3}y_2 y_1^{6/15}-5\varepsilon^{10}y_2-y_2y_1^{16/15})}, \\
\frac{dw_1}{d\eta}&=-\frac{\varepsilon^6 y_2}{(y_1^{2/3}+\varepsilon^{\beta_3})^{3/2}}+\varepsilon^{\beta_4+4},\\
\frac{dw_2}{d\eta}&=\varepsilon^{\beta_1+4}-\frac{\varepsilon^5y_2^2}{(y_1^{16/15}+\varepsilon^{10})^{3/2}}, \\
\frac{d\xi}{d\eta}&=\varepsilon^4, \\
\frac{d\varepsilon}{d\eta}&=0,
\end{aligned}
\label{4.1.5}
\end{equation}
where
\begin{align*}
A(y_1, y_2, \varepsilon, w_1, w_2, \xi)&=\frac{2(y_1^{2/3}+\varepsilon^{\beta_3})^4 y_1^{1/3}y_2^2}{\varepsilon(y_1^{16/15}+\varepsilon^{10})^{1/2}[y_2-\varepsilon^{\beta_4-2}(y_1^{2/3}+\varepsilon^{\beta_3})^{3/2}]}-\xi \varepsilon^2 y_1^{1/3}y_2(y_1^{2/3}+\varepsilon^{\beta_3})(y_1^{16/15}+\varepsilon^{10})\\
&-\frac{y_1^{1/3}(y_1^{2/3}+\varepsilon^{\beta_3})^{5/2}(y_1^{16/15}+\varepsilon^{10})^{5/2}}{\varepsilon y_2[y_2-\varepsilon^{\beta_4-2}(y_1^{2/3}+\varepsilon^{\beta_3})^{3/2}]}+2\xi y_1^{1/3}\varepsilon^{\beta_4}(y_1^{2/3}+\varepsilon^{\beta_3})^{5/2}(y_1^{16/15}+\varepsilon^{10})\\
&-2y_1^{1/3}w_1(y_1^{2/3}+\varepsilon^{\beta_3})^{5/2}(y_1^{16/15}+\varepsilon^{10})-\frac{2y_1^{1/3}\varepsilon^{\beta_1-1}(y_1^{2/3}+\varepsilon^{\beta_3})^4(y_1^{16/15}+\varepsilon^{10})}{\varepsilon[y_2-\varepsilon^{\beta_4-2}(y_1^{2/3}+\varepsilon^{\beta_3})^{3/2}]}\\
&+\frac{\varepsilon w_2 y_1^{1/3}(y_1^{2/3}+\varepsilon^{\beta_3})(y_1^{16/15}+\varepsilon^{10})^{5/2}}{y_2}-\frac{\varepsilon^{1+\beta_1} \xi y_1^{1/3}(y_1^{2/3}+\varepsilon^{\beta_3})(y_1^{16/15}+\varepsilon^{10})^{5/2}}{y_2},\\
B(y_1, y_2, \varepsilon, w_1, w_2, \xi)&=(y_1^{2/3}+\varepsilon^{\beta_3})^{3/2}(4\varepsilon^{\beta_3}y_2y_1^{6/15}+4y_2y_1^{16/15})\\
&\cdot\bigg[\frac{y_2^2(y_1^{2/3}+\varepsilon^{\beta_3})^{3/2}}{\varepsilon(y_1^{16/15}+\varepsilon^{10})^{3/2}[y_2-\varepsilon^{\beta_4-2}(y_1^{2/3}+\varepsilon^{\beta_3})^{3/2}]}-\frac{\varepsilon^2\xi [y_2-\varepsilon^{\beta_4-2}(y_1^{2/3}+\varepsilon^{\beta_3})^{3/2}]}{(y_1^{2/3}+\varepsilon^{\beta_3})^{3/2}}\\
&-\frac{\varepsilon^{\beta_1-1}(y_1^{2/3}+\varepsilon^{\beta_3})^{3/2}}{\varepsilon [y_2-\varepsilon^{\beta_4-2}(y_1^{2/3}+\varepsilon^{\beta_3})^{3/2}]}-w_1\bigg]\\
&-\frac{5}{2}\varepsilon(y_1^{16/15}+\varepsilon^{10})^{5/2}\bigg[\frac{(y_1^{2/3}+\varepsilon^{\beta_3})^{3/2}}{\varepsilon^2 [y_2-\varepsilon^{\beta_4-2}(y_1^{2/3}+\varepsilon^{\beta_3})^{3/2}]}-\frac{\xi \varepsilon y_2^2}{(y_1^{16/15}+\varepsilon^{10})^{3/2}}+\varepsilon^{\beta_1} \xi-w_2\bigg].
\end{align*}
The difficulty lies in matching the two outer solutions, expressed in $v$ and $y,$ satisfying the boundary conditions (\ref{3.1.3}), with an inner solution, expressed in $y_1,$ $y_2.$ System \ref{4.1.5} is of fundamental importance since it is not clear that one could use GSPT without some prior information about the asymptotics of the inner solution.

When $\varepsilon = 0$, the equation for $Y$ decouples from the rest of the
system, and is exactly \eqref{3.1.8}. Thus, \eqref{4.1.5} when $\varepsilon = 0$ is
\begin{equation}
\begin{aligned}
\frac{dy_1}{d\eta}&=\frac{5}{2}(\frac{y_1^{18/5}}{y_2^3}-2y_1^{7/5}), \\
\frac{dy_2}{d\eta}&=\frac{5}{2}\frac{y_1^{13/5}}{y_2^2}-4y_2y_1^{2/5}, \\
\frac{dw_1}{d\eta}&=-\frac{\varepsilon^6y_2}{(y_1^{2/3}+\varepsilon^{\beta_3})^{3/2}}\bigg|_{\varepsilon=0}=C(y_1,y_2),\\
\frac{dw_2}{d\eta}&=-\frac{\varepsilon^5 y_2^2}{(y_1^{16/15}+\varepsilon^{10})^{3/2}}\bigg|_{\varepsilon=0}=D(y_1, y_2), \\
\frac{d\xi}{d\eta}&=0, \\
\frac{d\varepsilon}{d\eta}&=0.
\end{aligned}
\label{4.1.6}
\end{equation}
The fact that $w_1$ and $w_2$ behave differently from each other is an
indication that the asymmetry in the generalized Rankine-Hugoniot relation will
enter into the analysis.

Desingularization of the system (by rescaling the time variable) on the set $y_1=0, y_2=0, \varepsilon=0$ shows that $E=\{(Y,W,\xi,\varepsilon): Y=0, \ \ \varepsilon=0\}$ is a $3$-dimensional space consisting
entirely of equilibria.
If we linearize at a point in $E$, we find that all $6$
eigenvalues are zero.
This is exactly the situation found by Schecter in \cite{Schecter} and a blow-up is necessary to resolve the behavior of the system near $E.$
\subsection{\textbf{The Blow-Up Construction}}\
\\
Under the change of variables
\begin{equation}
\begin{aligned}
y_1&=\bar{r}\bar{y}_1, \\
y_2&=\bar{r}^{11/15}\bar{y}_2, \\
w_1&=w_1, \\
w_2&=w_2, \\
\xi&=\xi, \\
\varepsilon&=\bar{r}\bar{\varepsilon}.
\end{aligned}
\label{4.2.1}
\end{equation}
with $|\bar{Y}|^2+\bar{\varepsilon}^2=1$,
the set $E$ becomes the set $\{\bar{r}=0\}$.
This set is now $5$-dimensional, in the $6$-dimensional $(\bar{Y},\bar{\varepsilon},\bar{r},W,
\xi)$-space
$\mathbf{X} = S^2\times \mathbb{R}_{+} \times \mathbb{R}^3$.
The system is also highly singular at $\{\bar{r}=0\}$,
but becomes non-singular upon division by $\bar{r}^{2/5}$.
Thus, we can study the dynamics of the transformed system on $\mathbf{X}$.
In terms of asymptotic structure, the change of variables \eqref{4.2.1} couples
the growth of $U$ to the limit $\varepsilon \to 0$ in the fashion predicted by the
formal asymptotics.
The range of $\bar{Y}$ and $\bar{\varepsilon}$ is confined to the unit sphere, but the dynamics of these variables can be explored since we can find invariant sets of low dimension of $\mathbf{X}$ and establish normally hyperbolicity. This will explain the connection between the bounded and unbounded parts of the singular shock. The homoclinic solution of Section \ref{formal} provides the inner dynamics and connecting the inner solution to the limit points $U_L$ and $U_R$
can now be pursued. 

We now define two intermediate points $q_L$ and $q_R$ which serve as bridge columns  connecting the inner and outer solutions. The connection between the homoclinic orbit, which can be identified as
the unique solution to \eqref{4.1.6} for which $w_{L2}-w_{R2} = k$ (the Rankine-Hugoniot
deficit, from equation \eqref{3.2.4}), and the states $U_L$ and $U_R$, which are limit
points of the manifolds $W^u(T_0^\varepsilon(U_L))$ and $W^s(T_2^\varepsilon(U_R))$
(for $\varepsilon \geq 0$), will be described. By making the transition from the unscaled variables $(U,W,\xi,\varepsilon)$
to the coordinate system in $\mathbf{X}$ we shall show that there is a unique orbit connecting $U_L$ with $q_L$. The connection between
$q_L$ and $q_R$ is via the homoclinic orbit and finally, $q_R$
connects to $U_R$ in the same manner as $U_L$ to $q_L$.

Because the beginning and ending connections are similar, in the sequel we will look only
at the first two steps.
Figure \ref{Figure4.2} gives a sketch of the key parts of the solution.

We begin with the definition of the intermediate points $q_L$ and $q_R$. In the coordinate system just introduced on $\mathbf{X}$, they are
\begin{align}
q_L&=(\frac{\bar{y}_2^{15/11}}{a_3},\bar{y}_2,0,0,W_L,s) \label{4.2.2}\\
q_R&=(\frac{\bar{y}_2^{15/11}}{a_2},\bar{y}_2,0,0,W_R,s) \label{4.2.3}
\end{align}
where we have written the coordinates in the order $(\bar{Y},\bar{\varepsilon},\bar{r},W,\xi)$;
$s$ is the speed of the singular shock, from \eqref{3.2.3};
$a_2$ and $a_3$ are the two roots
(in decreasing order) of
\begin{equation}\label{4.2.4}
a(a^{11/5}-2)=0 \;
\end{equation}
and $\bar{y}_2$ is the positive root of $\bar{y}_2^2+\frac{\bar{y}_2^{30/11}}{a_i^2}-1=0$
(so that $|\bar{Y}|^2+\bar{\varepsilon}^2=1$).
Finally,
\begin{equation}\label{4.2.5}
W_L=F(U_L)-sU_L, \quad W_R=F(U_R)-sU_R\,;
\end{equation}
we recall that $W=F(U_i)-\xi U_i$ ($i=L,R$) is the value of $W$ on the
invariant sets $T_0(U_L)$ and $T_2(U_R)$, so $q_L$ and $q_R$ are
specified by selecting the shock speed for $\xi$.
\subsection{\textbf{The First Stage of the Flow}}\
\\
From the description of the underlying planar system $U'=F(U)$ or $Y'=F(Y)$
and the sketch in Figure \ref{Figure3.1}, it is intuitively clear that the flow trajectories
are roughly parabolic.
Specifically, if we consider \eqref{4.1.4} with $\varepsilon =0$, $\xi =s$ and $W=W_L=
F(U_L)-sU_L$, then the equilibrium $U_L$ is a source.
\begin{proposition}
\label{proposition4.1}
The planar system $U'=F(U)-sU -W_L$ contains a negatively invariant region to the left of $U_L$, bounded by
\begin{align*}
\phi_1(v)&=y_L-E(v-v_L),\\
\phi_2(v)&=\frac{1}{s}\left(\frac{1}{v}-\frac{1}{v_L}\right)+y_L,
\end{align*}
where $E$ is such that
\begin{align*}
v_L \lambda_1(v_L,y_L)<E<v_L \lambda_2(v_L,y_L).
\end{align*}
\end{proposition}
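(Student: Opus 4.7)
The strategy is to treat the curves $\phi_1,\phi_2$ as the two boundary arcs (meeting tangentially at $U_L$) of a cusp-shaped region $R=\{(v,y):0<v<v_L,\ \phi_1(v)\le y\le\phi_2(v)\}$, and to verify that the planar vector field $F(U)-sU-W_L$ points \emph{outward} on each arc. Negative invariance then follows in the usual way: orbits inside $R$ cannot cross the boundary under backward time. I would begin with two preliminary observations obtained by direct substitution: both $\phi_i$ pass through $U_L$, and $\phi_2$ is exactly the $y$-nullcline, because setting $y'=\tfrac{1}{v}-sy-w_{L2}=0$ with $w_{L2}=\tfrac{1}{v_L}-sy_L$ gives $y=\tfrac{1}{s}\bigl(\tfrac{1}{v}-\tfrac{1}{v_L}\bigr)+y_L=\phi_2(v)$.

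\textbf{Outward flow on $\phi_2$.} Since $y'=0$ on $\phi_2$, the flow is horizontal and outward flow is equivalent to $v'>0$. Substituting $y=\phi_2(v)$ into $v'=\tfrac{y}{v}-sv-w_{L1}$ and collecting terms yields the clean factorization $v'=\tfrac{v_L-v}{sv^2 v_L}\,q(v)$ with $q(v)=s^2 v_L v^2+sy_L v+1$. Using the trace--determinant identities $\lambda_1+\lambda_2=-y_L/v_L^2$ and $\lambda_1\lambda_2=1/v_L^3$ (with $\lambda_i=\lambda_i(U_L)$), the roots of $q$ are $v_L\lambda_1/s$ and $v_L\lambda_2/s$, and these exceed $v_L$ precisely because of the strict overcompressive inequality $s<\lambda_1(U_L)<\lambda_2(U_L)$. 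Combined with $q(0)=1>0$, this forces $q(v)>0$ on $(0,v_L]$, hence $v'>0$ on $\phi_2\cap\{0<v<v_L\}$, which is the outward direction relative to $R$.

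\textbf{Outward flow on $\phi_1$.} The outward normal to the line $y=y_L-E(v-v_L)$ at the lower boundary of $R$ is proportional to $(-E,-1)$, so outward flow amounts to $y'+Ev'<0$. The key simplification is that $y+Ev=y_L+Ev_L$ is constant on $\phi_1$; substituting into $y'+Ev'$ causes the $s$-terms to cancel and produces $y'+Ev'=\tfrac{v_L-v}{vv_L}\,\bigl(v_L E^2+y_L E+1\bigr)$. The quadratic $P(E)=v_L E^2+y_L E+1$ has discriminant $y_L^2-4v_L>0$ (strict hyperbolicity at $U_L$), and using the same trace--determinant identities its roots are exactly $v_L\lambda_1(U_L)$ and $v_L\lambda_2(U_L)$. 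Therefore $P(E)<0$ iff $v_L\lambda_1(U_L)<E<v_L\lambda_2(U_L)$, which is precisely the hypothesis on $E$; since $(v_L-v)/(vv_L)>0$ for $v\in(0,v_L)$, this gives $y'+Ev'<0$ everywhere on $\phi_1\cap\{0<v<v_L\}$, i.e.\ strict outward flow.

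\textbf{Conclusion and the main point.} The two sign computations show the vector field is strictly outward on the relative interiors of the two boundary arcs, with the arcs meeting tangentially only at the source $U_L$ (a source because the eigenvalues of $DF(U_L)-sI$ are $\lambda_i(U_L)-s>0$ by overcompressibility); hence $R$ is negatively invariant. The delicate step is not the computation but the algebraic recognition that both sign-determining quadratics factor through the eigenvalues $\lambda_i(U_L)$: the condition for outward flow on $\phi_2$ reduces to the overcompressive inequality $s<\lambda_1(U_L)$, while the condition on $\phi_1$ reduces to the sandwich $v_L\lambda_1(U_L)<E<v_L\lambda_2(U_L)$, and the nonemptiness of this interval is exactly strict hyperbolicity of $U_L$, so the choice of $E$ required by the statement is automatically available.
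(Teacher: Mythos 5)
Your proof is correct and is exactly the approach the paper intends: the paper's proof consists of the single sentence ``A calculation of $U'$ along the curves $\phi_i$, similar to Lemma 3.2 in \cite{SSS}, gives the result,'' and your argument carries out precisely that calculation, with the correct factorizations of the sign-determining quadratics through $\lambda_i(U_L)$ and the correct use of overcompressivity ($s<\lambda_1(U_L)$) and of the hypothesis $v_L\lambda_1<E<v_L\lambda_2$. The only details left implicit are that $s>0$ in the physical region (needed for the sign of the prefactor on $\phi_2$) and that $\phi_1\le\phi_2$ on $(0,v_L)$ so the region is well defined, both of which follow from the same inequalities.
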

\begin{proof}
A calculation of $U'$ along the curves $\phi_i$, similar to Lemma 3.2 in \cite{SSS},
gives the result.
\end{proof}
If we now consider \eqref{4.1.4} with $\varepsilon =0$, $\xi =s$ and $W=W_R=
F(U_R)-sU_R$, then the equilibrium $U_R$ is a sink.
\begin{proposition}
\label{proposition4.2}
The planar system $U'=F(U)-sU -W_R$ contains a positively invariant region to the right of $U_R$, bounded by
\begin{align*}
\phi_1(v)&=y_R-E(v-v_R),\\
\phi_2(v)&=sv(v-v_R)+\frac{y_R}{v_R}v,
\end{align*}
where $E$ is such that
\begin{align*}
v_R \lambda_1(v_R,y_R)<E<v_R \lambda_2(v_R,y_R),
\end{align*}
and a negatively invariant region to the left of $U_R$ bounded by
\begin{align*}
\phi_3(v)&=\frac{1}{s}\left(\frac{1}{v}-\frac{1}{v_R}\right)+y_R,
\end{align*}
and the coordinate axes.
\end{proposition}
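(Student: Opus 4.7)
The plan is to parallel the proof of Proposition \ref{proposition4.1} and Lemma 3.2 of \cite{SSS}: I would parametrize each bounding curve, compute the planar vector field
\begin{align*}
v' &= \frac{y}{v}-sv-\left(\frac{y_R}{v_R}-sv_R\right),\\
y' &= \frac{1}{v}-sy-\left(\frac{1}{v_R}-sy_R\right)
\end{align*}
along it, and verify that the field points into the claimed region (for positive invariance) or out of it (for negative invariance). The first observation to record is structural: $\phi_2$ is exactly the $v$-nullcline through $U_R$, and $\phi_3$ is exactly the $y$-nullcline through $U_R$, which reduces the sign checks on these two curves to a single scalar quantity.

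For the positively invariant region to the right of $U_R$, I would first substitute $y=\phi_2(v)$ into $y'$ to obtain, after algebra, an expression of the form $y'|_{\phi_2}=-(v-v_R)g(v)$, whose sign for $v>v_R$ is controlled by the location of $U_R$ in the feasible triangle $OAB$; this yields the correct crossing direction on $\phi_2$. On the straight boundary $\phi_1$, the field vanishes at $U_R$, so I would argue via the linearization: the eigenvectors of $DF(U_R)-sI$ have slopes $(y_R\pm\sqrt{y_R^2-4v_R})/(2v_R)$, which one checks equal $-v_R\lambda_2(U_R)$ and $-v_R\lambda_1(U_R)$, respectively. The assumption $v_R\lambda_1<E<v_R\lambda_2$ places the slope $-E$ of $\phi_1$ strictly between these eigendirections. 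A Taylor expansion of $(-E)v'-y'$ at $U_R$ therefore has a definite sign which, by monotonicity of the remainder, propagates to all $v>v_R$ in the region of interest, delivering the desired inward transversality.

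The negatively invariant region to the left of $U_R$ is handled symmetrically. Along $\phi_3$ one has $y'=0$, and a direct substitution produces an explicit sign for $v'|_{\phi_3}$ when $v<v_R$ that forces the flow to leave the region as $\tau$ increases, i.e., backward invariance. The coordinate axes $v=0$ and $y=0$ complete the envelope: the singularities of $F$ at $v=0$ (the terms $y/v$ and $1/v$ in the flux) prevent trajectories from reaching that wall in finite time, so nothing can enter the region through $v=0$ under the reverse flow; the check on $y=0$ is an elementary sign computation.

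The hard part will be the degeneracy of the field at $U_R$ itself, where the pointwise sign computations on $\phi_1$ and $\phi_2$ trivialize. Transversality of the boundary to the flow at the equilibrium must be extracted from the linearization, and this is precisely what the chain of inequalities $v_R\lambda_1(U_R)<E<v_R\lambda_2(U_R)$ encodes: it ensures that $\phi_1$ is transverse to both stable invariant directions of the sink, so that the sign obtained from the Taylor expansion is determined by the spectral data rather than by a higher-order cancellation. Once transversality at $U_R$ is secured, the remainder of the argument reduces to a finite list of elementary sign checks in $(v,y)$ on the triangle $OAB$.
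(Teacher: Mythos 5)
Your proposal follows essentially the same route as the paper, whose proof is itself only the one-line remark that a calculation of $U'$ along the curves $\phi_i$, in the manner of Lemma 3.2 of \cite{SSS}, gives the result. Your added structural observations check out and supply more detail than the paper does: $\phi_2$ and $\phi_3$ are indeed the $v$- and $y$-nullclines of $U'=F(U)-sU-W_R$ through $U_R$, and the hypothesis $v_R\lambda_1(U_R)<E<v_R\lambda_2(U_R)$ places the slope $-E$ of $\phi_1$ strictly between the eigendirection slopes $\bigl(y_R\pm\sqrt{y_R^2-4v_R}\bigr)/(2v_R)$ of $DF(U_R)-sI$ (note only that the ``respectively'' pairing is reversed: the $+$ sign corresponds to $-v_R\lambda_1$).
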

\begin{proof}
A calculation of $U'$ along the curves $\phi_i$, similar to Lemma 3.2 in \cite{SSS},
gives the result.
\end{proof}
\begin{figure}
\psfrag{1}{$v$}
\psfrag{2}{$y$}
\psfrag{3}{$y^2=4v$}
\psfrag{4}{$U_L$}
\psfrag{5}{$y=\phi_1(v)$}
\psfrag{6}{$y=\phi_2(v)$}
\psfrag{7}{$y=\phi_2(v)$}
\psfrag{8}{$y=\phi_1(v)$}
\psfrag{9}{$U_R$}
\psfrag{10}{$y=\phi_3(v)$}
\begin{center}
\scalebox{0.9}{
\includegraphics{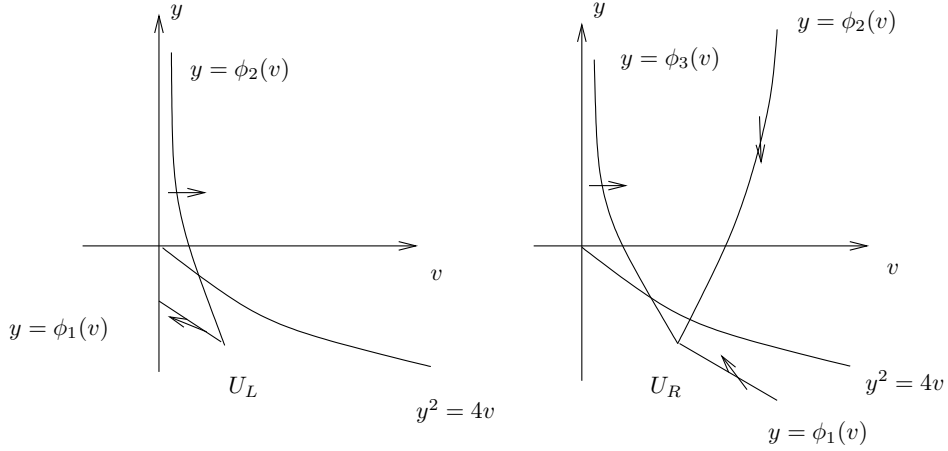}}
\caption{Invariant Regions.}
\label{Figure4.1}
\end{center}
\end{figure}
In particular, this means that trajectories within the curvilinear wedge between the
two curves of Proposition \ref{proposition4.1} and the $v$-axis all have $U_L$ as their $\alpha$-limits and similarly the trajectories within the open curvilinear wedge between the two curves $\phi_1$ and $\phi_2$ of Proposition \ref{proposition4.2} all have $U_R$ as their $\omega$-limits.
The trajectory beginning near $U_L$ becomes unbounded but the ratio $\frac{y_2^{15/11}}{y_1}$
remains bounded.
This motivates introducing a new coordinate chart on $\mathbf{X}$, which we will call
{\em Chart 2}, following Schecter's terminology in \cite{Schecter}.

\begin{figure}
\psfrag{e}{$\bar{\varepsilon}$}
\psfrag{y_2}{$y_2$}
\psfrag{u}{$u$}
\psfrag{xi}{$\xi$}
\psfrag{r}{$r$}
\psfrag{a}{$a$}
\psfrag{b}{$b$}
\psfrag{4}{$a_4$}
\psfrag{3}{$a_3$}
\psfrag{13}{\hspace*{-0.2cm}$\xi=s_{\text{singular}}$}
\psfrag{14}{$q_L$}
\psfrag{2}{$a_2, \ \xi=s_{\text{singular}}, \ \bar{q}_R$}
\psfrag{20}{$a_2, \ \xi=s_{\text{singular}}, \ {q}_R$}
\psfrag{1}{$a_1$}
\psfrag{S_0}{$S_0$}
\psfrag{S_2}{$S_2$}
\psfrag{K_1}{$\xi<\lambda_1(U)$}
\psfrag{K_2}{$\lambda_2(U)<\xi$}
\psfrag{U_L}{$U_L$}
\psfrag{U_R}{$U_R$}
\psfrag{5}{\hspace*{-2.8cm}$W^u(T_0^0(U_L)), \ \xi<s_{\text{singular}}$}
\psfrag{6}{\hspace*{-1.6cm}$W^u(N_0^0(U_L)), \  \xi<s_{\text{singular}}$}
\psfrag{7}{\hspace*{-0.5cm}$W^s(q_L)$}
\psfrag{8}{\hspace*{-0.2cm}$W^u(C_3)$}
\psfrag{9}{$W^s(C_2)$}
\psfrag{10}{\hspace*{-0.1cm}$W^u(q_R)$}
\psfrag{11}{\hspace*{1.4cm}$W^s(N_2^0(U_R)), \ \xi>s_{\text{singular}}$}
\psfrag{12}{\hspace*{-3.7cm}$W^s(T_2^0(U_R)), \ \xi>s_{\text{singular}}$}
\begin{center}
\scalebox{0.9}{
\includegraphics{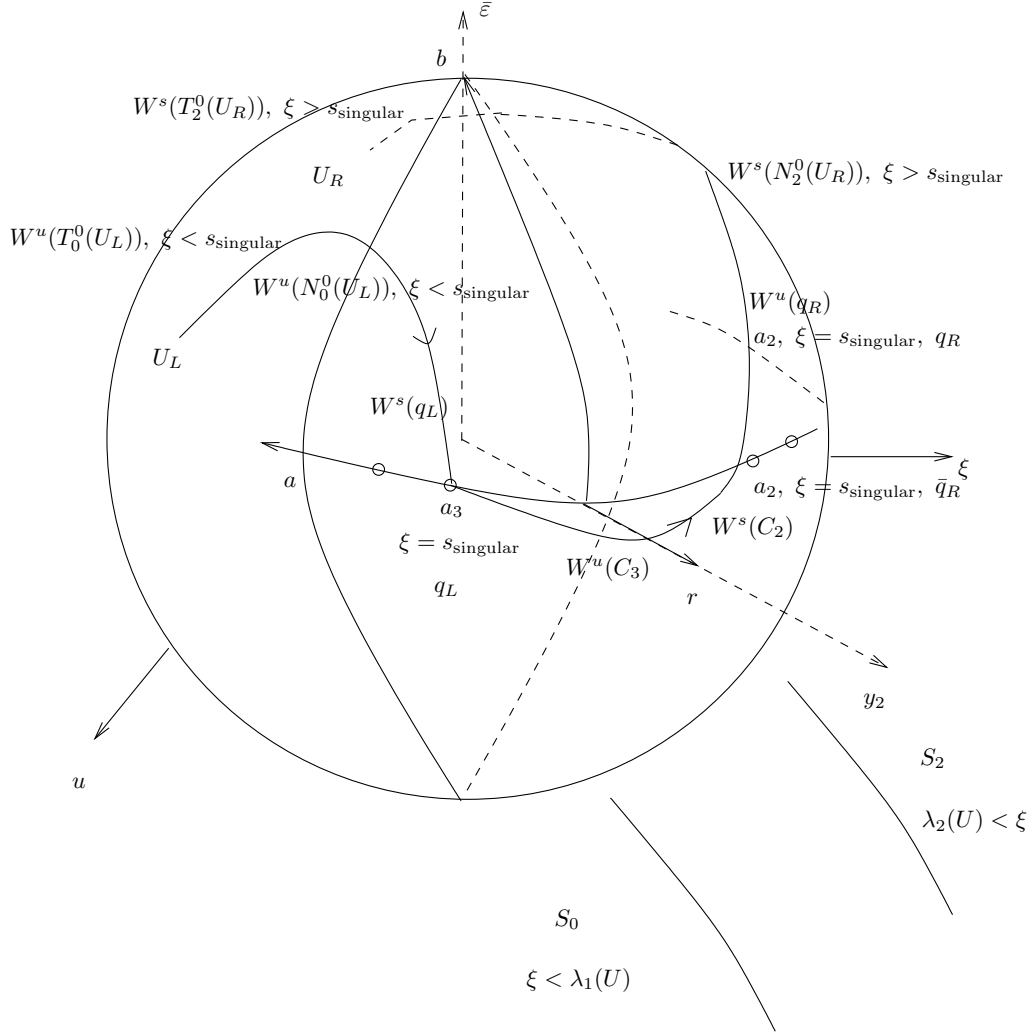}}
\caption{Chart 1 and 2.}
\label{Figure4.2}
\end{center}
\end{figure}
In terms of the coordinates $(\bar{Y},\bar{\varepsilon},\bar{r})$
(and, for reference, the scaled coordinates $(Y,\varepsilon)$ and the original
coordinates $(U,\varepsilon)$), we define, on the portion of
$\mathbf{X}$ where $\bar{y}_1$ $\bar{y}_2$ are positive,
\begin{equation}
\begin{aligned}
&a=\frac{\bar{y}_2^{15/11}}{\bar{y}_1}=\frac{y_2^{15/11}}{{y_1}}\left(\sim \frac{(y+\varepsilon^{\beta_1})^{10/11}}{(v+\varepsilon^{\beta_4})^{5/11}}\ \text{when} \ v, y \  \text{are large}\right), \\
&r=\bar{r}\bar{y}_2^{15/11}=y_2^{15/11}\left(\sim\frac{\varepsilon^{15/2}(y+\varepsilon^{\beta_1})^{75/22}}{(v+\varepsilon^{\beta_4})^{60/11}}\ \text{when} \ v, y \  \text{are large}\right),\\
&b=\frac{\bar{\varepsilon}}{{\bar{y}_2}^{15/11}}=\frac{\varepsilon}{{y_2}^{15/11}}\left(\sim\frac{(v+\varepsilon^{\beta_4})^{60/11}}{{\varepsilon^{13/2}(y+\varepsilon^{\beta_1})^{75/22}}}\ \text{when} \ v, y \  \text{are large}\right),
\end{aligned}
\label{4.3.1}
\end{equation}
and rescale the time variable to $\dfrac{r^{2/5}}{a^{39/15}}\eta$, which we will call $\zeta.$ This desingularizes the system (necessary to obtain a nontrivial flow) on the set $r=0, a=0$ but leaves it invariant. In these coordinates,
the system (\ref{4.1.5}) becomes
\begin{equation}
\begin{aligned}
\frac{d a}{d\zeta}&=\frac{a}{(4F-5G-1)}\bigg\{-\frac{75}{22}(1+G)^{5/2}\cdot\left(\frac{(1+F)^{3/2}}{(1-\Theta)}-\frac{\xi r^{39/15}a^{13/5}b^3}{(1+G)^{3/2}}-r^{26/15}ab^2w_2+r^{\beta_1}r^{26/15}ab^{2+\beta_1} \xi\right)\\
&+\frac{60}{11}a^{24/15}(1+F)^{5/2}\left(\frac{a^{3/5}(1+F)^{3/2}}{(1+G)^{3/2}(1-\Theta)}-\frac{r^{13/5} ab^3\xi(1-\Theta)}{(1+F)^{3/2}}-r^{13/15}bw_1-\frac{r^{2/15}r^{\beta_1-1}b^{\beta_1-1}}{a(1-\Theta)}(1+F)^{3/2}\right)\\
&-\frac{5a^{33/15}(1+F)^4}{(1+G)^{1/2}(1-\Theta)}+\frac{5}{2}\xi  r^{39/15}a^{39/15}b^3(1+F)(1+G)+\frac{5}{2}(1+F)^{5/2}(1+G)^{5/2}\\
&-5b^{\beta_4+1}r^{13/15}r^{\beta_4}a^{24/25}\xi(1+F)^{5/2}(1+G)\\
&+5bw_1 r^{13/15}a^{24/15}(1+F)^{5/2}(1+G)-\frac{5}{2}w_2r^{26/15}ab^2(1+F)(1+G)^{5/2}\\
&+\frac{5r^{\beta_1-1}r^{2/15}a^{3/5}b^{\beta_1-1}(1+F)^4(1+G)}{(1-\Theta)}+\frac{5}{2}\xi r^{\beta_1+1}r^{11/15}a b^{2+\beta_1}(1+F)(1+G)^{5/2}\bigg\}, 
\end{aligned}
\label{4.3.2}
\end{equation}
\begin{align*}
\frac{dr}{d\zeta}&=\frac{15r}{11(4F-5G-1)}\bigg\{-\frac{5}{2}(1+G)^{5/2}\\
&\cdot\left(\frac{(1+F)^{3/2}}{(1-\Theta)}-\frac{\xi r^{39/15}a^{13/5}b^3}{(1+G)^{3/2}}-r^{26/15}ab^2w_2+r^{\beta_1}r^{26/15}ab^{2+\beta_1} \xi\right)\\
&+4a^{24/15}(1+F)^{5/2}\left(\frac{a^{3/5}(1+F)^{3/2}}{(1+G)^{3/2}(1-\Theta)}-\frac{r^{13/5} ab^3\xi(1-\Theta)}{(1+F)^{3/2}}-r^{13/15}bw_1-\frac{r^{2/15}r^{\beta_1-1}b^{\beta_1-1}}{a(1-\Theta)}(1+F)^{3/2}\right)\bigg\}, \\
\frac{dw_1}{d\zeta}&=-\frac{r^{16/3}a^{54/15}b^6}{(1+F)^{3/2}}+a^{39/15}b^{4+\beta_4}r^{\beta_4}r^{54/15}, \\
\frac{dw_2}{d\zeta}&=a^{39/15}b^{4+\beta_1}r^{\beta_1}r^{54/15}-\frac{r^{67/15}a^{21/5}b^5}{(1+G)^{3/2}}, \\
\frac{d\xi}{d\zeta}&=r^{18/5}a^{39/15}b^4, \\
\frac{db}{d\zeta}&=\frac{15b}{11(4F-5G-1)}\bigg\{\frac{5}{2}(1+G)^{5/2}\\
&\cdot\left(\frac{(1+F)^{3/2}}{(1-\Theta)}-\frac{\xi r^{39/15}a^{13/5}b^3}{(1+G)^{3/2}}-r^{26/15}ab^2w_2+r^{\beta_1}r^{26/15}ab^{2+\beta_1} \xi\right)\\
&-4a^{24/15}(1+F)^{5/2}\left(\frac{a^{3/5}(1+F)^{3/2}}{(1+G)^{3/2}(1-\Theta)}-\frac{r^{13/5} ab^3\xi(1-\Theta)}{(1+F)^{3/2}}-r^{13/15}bw_1-\frac{r^{2/15}r^{\beta_1-1}b^{\beta_1-1}}{a(1-\Theta)}(1+F)^{3/2}\right)\bigg\},
\end{align*}
where $$F(a,r,b)=r^{\beta_3-1}r^{1/3}a^{2/3}b^{\beta_3}, \ \ G(a,r,b)=r^{134/15}a^{16/15}b^{10}, \ \ \Theta(a,r,b)=\frac{b^{\beta_4-2}r^{\beta_4-2}r^{4/15}}{a}(1+F)^{3/2}.$$
System (\ref{4.3.2}) plays a key role, since it contains all the dynamics of the
problem, scaled in a way that emphasizes the region where
the singular shock is formed.
In addition, this system also possesses an invariant manifold, which is
normally hyperbolic, and we are able to prove existence of a solution to the Dafermos
regularization, for small $\varepsilon$, by exhibiting a solution which is close to this invariant
manifold during part of its trajectory.

In the region of interest we require $r=0$ (which corresponds to $\varepsilon =0$) and $b=0$ to
find invariant manifolds, and then  we have an equilibrium of (\ref{4.3.2}) when $\frac{da}{d\zeta}=0$;
that is, when
$a$ is a root of the equation \eqref{4.2.4} introduced in the definition of $q_L$
and $q_R$.
The two roots of \eqref{4.2.4} are
\begin{equation*}
\begin{aligned}
a_2=2^{5/11}, \ \ a_3=0.
\end{aligned}
\end{equation*}
Using these roots, we define
$$P_j=\{(a,r,W,\xi,b): a=a_j, r=0, b=0\} \quad \textrm{for  }j=2, 3.$$
Each of these sets
is a $3$-dimensional manifold of equilibria, {\em corner equilibria} in Schecter's
definition \cite{Schecter}.
If we linearize (\ref{4.3.2}) at $a=a_j$, $r=b=0$, we find
a zero eigenvalue of multiplicity 3, with 3 linearly independent eigenvectors
lying in $P_j$.
There are three additional eigenvalues,
\begin{align*}
\lambda_2&=-\frac{16}{11}a_j^{11/15}+\frac{10}{11}\,,\\
\lambda_3&=-\frac{60}{11}a_j^{11/5}+\frac{75}{22}\,,\\
\lambda_4&=\frac{60}{11}a_j^{11/5}-\frac{75}{22}\,,
\end{align*}
and since
the corresponding eigenvectors, which are
\begin{align}
&R_2=(1, 0, 0, 0 , 0 , 0)\,,\nonumber\\
&R_3=(0, 1, 0, 0, 0, 0)\,,\label{4.3.3}\\
&R_4=(0, 0, 0, 0, 0, 1)\nonumber
\end{align}
respectively, are transversal to $P_j$, the
$P_j$ are normally hyperbolic manifolds.

We fix a point
$(a_3, 0, W_0, s_{\text{singular}}, 0)$  in $P_3$.
Then $\lambda_4<0<\lambda_2, \lambda_3$
so the point has a 1-dimensional stable manifold tangent to $R_4$.
Indeed, the stable manifold of any point with $r=b=0$ is
contained in the $2$-dimensional plane
$$\{(a,r,w_1,w_2,\xi,b): r=0, \ W=W_0, \ \xi=s_{\text{singular}}\}\,,$$
which is invariant under the flow \eqref{4.3.2}.
Thus the stable manifold of $P_3$ is tangent to
\begin{equation} \label{4.3.4}
\{(a,r,W,\xi,b): r=0, \ a=a_3\}
\end{equation}
at $P_3$.

Since $\lambda_2$ and $\lambda_3$ are positive at points of $P_3$,
each point has a $2$-dimensional unstable manifold tangent to the plane
spanned by $R_2$ and $R_3.$
(The same two eigenvalues, $\lambda_2$ and $\lambda_3$, are
negative on $P_2$.)
Thus $P_3$ has the $5$-dimensional unstable manifold
$$W^u(P_3)=\{(a,r,w_1,w_2,\xi,b): b=0\}.$$
The point $q_L$, identified earlier, is a particular point of $P_3$, with $W=W_L$
and $\xi = s_{\text{singular}}$.
(In Chart 2 coordinates, $q_L= (a_3,0,W_L,s_{\text{singular}},0)$ and $v=0,$ $y=0$ in the original variables.)
Through the $1$-dimensional stable manifold of $q_L\in P_3$, there is a unique
connection backwards in time to $U_L$, and through the $2$-dimensional unstable
manifold, $q_L$ connects forward to the singular orbit.
We state
\begin{proposition} \label{propq}
There is a unique orbit in the $2$-dimensional invariant plane
$$\{r=0,W=W_L,\xi=s_{\text{singular}}\}$$ that connects $q_L$ as $\zeta\to\infty$
with $U_L$ as $\zeta\to -\infty$.
Furthermore, in a neighborhood of $q_L$, we have $b>0$ along the orbit.
\end{proposition}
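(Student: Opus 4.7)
The plan is to analyze the planar flow on the invariant surface $\{r=0,\; W=W_L,\; \xi=s_{\text{singular}}\}$ in Chart 2, exhibit $q_L$ as a hyperbolic saddle of that planar system, and identify its stable manifold with the unique orbit whose $\alpha$-limit (read off in Chart 1) is $U_L$.

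First, I would restrict system \eqref{4.3.2} to this surface. When $r=0$, the auxiliary expressions $F$, $G$, $\Theta$ all vanish and the equations for $a$ and $b$ decouple from the rest, reducing to the planar system
\begin{align*}
\frac{da}{d\zeta} &= \tfrac{5a}{11}\bigl(2 - a^{11/5}\bigr),\\
\frac{db}{d\zeta} &= \tfrac{15b}{11}\bigl(4a^{11/5} - \tfrac{5}{2}\bigr).
\end{align*}
At $q_L$, which in these coordinates is $(a,b)=(0,0)$, the linearization has eigenvalues $\lambda_2=\tfrac{10}{11}$ (along $R_2$) and $\lambda_4=-\tfrac{75}{22}$ (along $R_4$), so $q_L$ is a saddle. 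The set $\{a=0\}$ is invariant since $da/d\zeta \equiv 0$ there, and on it $b$ decays exponentially to $0$; hence the one-dimensional stable manifold of $q_L$ inside the surface coincides with $\{a=0,\; b>0\}$. This already establishes uniqueness of any orbit in the surface approaching $q_L$ and gives $b>0$ along it in a neighborhood of $q_L$.

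Second, I would identify $U_L$ as the $\alpha$-limit by switching to Chart 1. Viewing the same invariant surface through the coordinates of \eqref{4.1.4} with $W=W_L$, $\xi=s_{\text{singular}}$, $\varepsilon=0$, the dynamics reduces to the planar ODE $U'=F(U)-s_{\text{singular}}U-W_L$ in $(v,y)$. The linearization at $U_L$ is $DF(U_L)-s_{\text{singular}}I$, whose eigenvalues $\lambda_i(U_L)-s_{\text{singular}}$ ($i=1,2$) are both strictly positive by hypothesis (1) of Theorem \ref{thm3.1} combined with strict hyperbolicity. Thus $U_L$ is an unstable node, and by Proposition \ref{proposition4.1} the curvilinear wedge bounded by $\phi_1$ and $\phi_2$ to the left of $U_L$ is negatively invariant, so every orbit captured in this wedge has $U_L$ as its $\alpha$-limit.

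Third, I would match the two chart descriptions along the unique stable orbit. Traversing $\{a=0,\;b>0\}$ backwards in $\zeta$ sends $b \to \infty$; undoing the blow-up \eqref{4.2.1} and the scaling \eqref{3.1.4} converts this into a solution of the Chart 1 planar system that enters the wedge. The direction of the planar vector field along $\phi_1$ and $\phi_2$ (already computed in the proof of Proposition \ref{proposition4.1}) prevents the pulled-back orbit from leaving the wedge, so the $\alpha$-limit set theorem identifies $U_L$ as the limit. Together with the uniqueness of the Chart 2 stable manifold, this furnishes exactly one orbit in the invariant plane with the required endpoint behavior. The chief obstacle is this chart matching: because the scaling \eqref{3.1.4} compresses a whole range of $(v,y)$ asymptotics into the blown-up boundary, care is required with the powers $\beta_i$ and $q=4$; however, the well-controlled local invariant manifolds at both endpoints — $q_L$ (a hyperbolic saddle in Chart 2) and $U_L$ (an unstable node in Chart 1) — reduce the global matching to a continuity and sign-check argument that should be manageable.
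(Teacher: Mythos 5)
Your proposal follows essentially the same route as the paper's proof: a local saddle analysis at $q_L$ in Chart 2 (stable manifold tangent to $R_4$, which is exactly where the paper also reads off $b>0$), the negatively invariant wedge of Proposition \ref{proposition4.1} to identify $U_L$ as the backward limit, and a chart-matching step that the paper likewise only asserts (citing Schecter \cite{Schecter} and Theorem 3.1 of \cite{SSS}). Your explicit reduction to $\dot a=\tfrac{5a}{11}\bigl(2-a^{11/5}\bigr)$, $\dot b=\tfrac{15b}{11}\bigl(4a^{11/5}-\tfrac52\bigr)$ is consistent with the paper's eigenvalue data at $a_3=0$ and with its identification of $W^s(q_L)$ as $\{r=0,\ W=W_L,\ \xi=s_{\text{singular}},\ a=a_3\}$, so the argument is sound at the same level of detail as the original.
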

\begin{proof}
The proof is similar to the result of Schecter \cite{Schecter}, with details
motivated by
Theorem 3.1 of \cite{SSS}.
One can verify that, in one direction,
the stable manifold of $q_L$ is in the interior of the
negatively invariant region for $U_L$.
The inequality for $b$ follows from examining the eigenvector tangent to
the manifold at $q_L$. The manifolds are described in different coordinate
systems since the coordinate system of Chart 2 is not suitable for describing the entire
trajectory because $y$ (or $y_1,$ $y_2$ or $\bar{y}_1,$ $\bar{y}_2$) need not remain positive throughout
the trajectory.
\end{proof}

We now fix a point
$(a_2, 0, W_0, s_{\text{singular}}, 0)$  in $P_2$.
Then $\lambda_2, \lambda_3<0<\lambda_4$
so the point has a 1-dimensional unstable manifold tangent to $R_4$.
Indeed, the unstable manifold of any point with $r=b=0$ is
contained in the $2$-dimensional plane
$$\{(a,r,w_1,w_2,\xi,b): r=0, \ W=W_0, \ \xi=s_{\text{singular}}\}\,,$$
which is invariant under the flow \eqref{4.3.2}.
Thus the unstable manifold of $P_2$ is tangent to
\begin{equation} \label{4.3.4}
\{(a,r,W,\xi,b): r=0, \ a=a_2\}
\end{equation}
at $P_2$.

Since $\lambda_2$ and $\lambda_3$ are negative at points of $P_2$,
each point has a $2$-dimensional stable manifold tangent to the plane
spanned by $R_2$ and $R_3.$ Thus $P_2$ has the $5$-dimensional stable manifold
$$W^s(P_2)=\{(a,r,w_1,w_2,\xi,b): b=0\}.$$
The point $q_R$, identified earlier, is a particular point of $P_2$, with $W=W_R$
and $\xi = s_{\text{singular}}$. The point $q_R$ corresponds to $(v,y),$ $y^2=2v$ in the original variables. Through the $1$-dimensional unstable manifold of $q_R\in P_2$, there is a unique connection forward in time to $U_R.$ 

On the other hand, we need to show that through the $2$-dimensional stable manifold, $q_R$ connects backwards to the singular orbit. It should be noted that the connections between $U_L,$ $q_L,$ $q_R$ and $U_R$ 
do not solve the problem, since for example $q_L$ and $U_L$ are the $\omega$- and $\alpha$-limits
of a unique orbit, and thus are not themselves part of a longer connection between $U_L$
and $U_R$. To demonstration that connecting
orbits exist in the neighborhood of these invariant manifolds we use the
Corner Lemma to show that $U_L$ and $U_R$ can be connected when $\varepsilon>0$.

For this, we introduce an $1$-dimensional set that contains $q_L.$ We recall the definitions of $W_L$ and $W_R$, \eqref{4.2.5}, and of $q_L$ and $q_R$
in the Chart $2$ coordinate system
$$q_L=(a_3,0,W_L,s_\text{singular},0), \ \  q_R=(a_2,0,W_R,s_\text{singular},\infty).$$
In addition, we note that
using (\ref{3.2.7}) $w_{L1}=w_{R1},$ and  (\ref{3.2.8}), $w_{R2}=w_{L2}-k<w_{L2}$.

If we express
$q_L$ and $q_R$ in $Y,\varepsilon$ coordinates, they are points in  $E$
(the invariant set of equilibria of \eqref{4.1.6}).
Specifically,
$q_L=(0,W_L,s_{\text{singular}},0)$ and $q_R=(0,W_R,s_{\text{singular}},0)$.
Following the discussion of the homoclinic orbits in Section \ref{formal},
there is a unique solution of (\ref{4.1.6}) that connects the two points such that $k=\lim_{\varepsilon\rightarrow 0} \varepsilon^5 \int \frac{y_2^2}{(y_1^{16/15}+\varepsilon^{10})^{3/2}}\ d\eta.$
Write the solution as
$$(Y(\eta),W(\eta),s_{\text{singular}},0)\,,$$
with
$$w_1(\eta)=w_{L1}=w_{R1},\quad w_2(\eta)=w_{L2}-\lim_{\varepsilon\rightarrow 0} \varepsilon^5 \int_{-\infty}^{\eta} \frac{y_2(t)^2}{(y_1(t)^{16/15}+\varepsilon^{10})^{3/2}}\ dt=w_{L2}-k(\eta).$$
This can be
written in the coordinates of
 Chart 2, $(a, r ,W, \xi, b)$ as
\begin{equation}
q(\zeta)=(a(\zeta),r(\zeta),W(\zeta), s_{\text{singular}},b(\zeta))\,.
\label{4.4.3}
\end{equation}
Here $r(\pm\infty)=0$, $a(-\infty)=a_3$, $a(+\infty)=a_2$.
We note that
$q(-\infty)=q_L$, $q(+\infty)=q_R.$
Geometrically, $q(\zeta)$ lies in the $4$-dimensional subspace of $\mathbb{R}^6$
(in Chart 2 coordinates) with $w_1=w_{1L}=w_{2L}$ and $\xi=s.$ In addition there exists $q_M=(a_M, r_M, w_{1L},w_{2M}, s_{\text{singular}}, 0)$ which corresponds to $(v,y)=(0,\infty)$ in the original variables. 

We define
\begin{align*}
C_3&=\{(a,r,W,  \xi,b): a=a_3, \ r=0, W=F(U_L)-\xi U_L, \xi<\lambda_1(U_L), \ \ b=0\}\subseteq P_3,\\
D_3&=\{(a,r,W,  \xi,b): a=a_3, \ r=0, w_1=w_{L1}, \ w_2=w_{L2}-\lim_{\varepsilon\rightarrow 0} \varepsilon^5 \int_{-\infty}^{\zeta} \frac{y_2(t)^2}{(y_1(t)^{16/15}+\varepsilon^{10})^{3/2}}\ dt, \\ 
& \ \ \ \ \ \xi=s_{\text{singular}}, \ b=b(\zeta), \ \zeta\in\mathbb{R}\}, \\
E_3&=\{(a,r,W,  \xi,b): a=a_3, \ r=0, w_1=w_{L1}, \ w_2=w_{L2}-\lim_{\varepsilon\rightarrow 0} \varepsilon^5 \int_{-\infty}^{\zeta} \frac{y_2(t)^2}{(y_1(t)^{16/15}+\varepsilon^{10})^{3/2}}\ dt, \\ 
&\ \text{with} \ \zeta \ \text{such that} \ b=0, \ \xi=s_{\text{singular}}\}\subseteq D_3, \\
C_2&=\{(a,r,W,  \xi,b): a=a_2, \ r=0, W=F(U_R)-\xi U_R, \lambda_2(U_R)<\xi, \ \ b=\infty\},
\end{align*}
where we have not fixed the values of $W$ as we did to define $q_i$.
The stable manifold of $C_3$ is a $2$-dimensional surface in the $5$-dimensional space $r=0$;
it is the union of the stable manifolds of the points of $C_3.$ Since up to now we have not made use of the specific value of $\xi$ (beyond its relation to the eigenvalues of $dF(U_L)$), the results of Proposition \ref{propq} hold
at each point of $C_3$, and we have
(recalling that $T_0^0(U_L)$ is precisely the 1-dimensional set in which $\xi$ is
allowed to vary)
\begin{proposition}
In the coordinate system of Chart 2, the set
$W^u(T_0^0(U_L))$ takes the form
\begin{equation}\label{4.3.5}
W^u(N_0^0(U_L))=\{(a_\xi(\tau),0,W,\xi, b_\xi(\tau))\}\,,
\end{equation}
where $W=F(U_L)-\xi U_L$ for a fixed $\xi<\lambda_1(U_L)$ and $(a_\xi,b_\xi)$,
with $$a=\frac{y_2^{15/11}(\tau)}{{y_1(\tau)}} \ \text{and} \ b=\frac{\varepsilon(\tau)}{y_2^{15/11}(\tau)},$$ where $$v(\tau)=\frac{\varepsilon^2(\tau) y_2(\tau)}{(y_1^{2/3}(\tau)+\varepsilon^{\beta_3}(\tau))^{3/2}}-\varepsilon^{\beta_4}(\tau), \ \ \ y(\tau)=\frac{\varepsilon(\tau) y_2^2(\tau)}{(y_1^{16/15}(\tau)+\varepsilon^{10}(\tau))^{3/2}}-\varepsilon^{\beta_1}(\tau),$$ is the expression in Chart
2 coordinates of the solution of \eqref{4.3.2} with $\omega$-limit in $C_3$
for fixed $\xi$.
The intersection of $W^u(N^0_0(U_L))$ and $W^s(P_3)$ is an open
subset $Q_3$ of $W^s(C_3)$, namely the points of $W^s(C_3)$ with $b>0$.
\end{proposition}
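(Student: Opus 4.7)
The proposition bundles a Chart 2 description of $W^u(T_0^0(U_L))$ with a characterization of the intersection $W^u(N_0^0(U_L))\cap W^s(P_3)$ as an open piece of $W^s(C_3)$. My strategy is to obtain each $\xi$-fiber of the unstable manifold through planar dynamics near $U_L$, transport the fiber into Chart 2, and then glue fibers across $\xi$. For fixed $\xi<\lambda_1(U_L)$, I restrict \eqref{4.1.4} to $\varepsilon=0$ and $W=F(U_L)-\xi U_L$; the linearization of the resulting planar system at $U_L$ has eigenvalues $-\xi+\lambda_i(U_L)$, both strictly positive, so $U_L$ is a source. Following the pattern of Proposition \ref{proposition4.1}, I would construct a negatively invariant wedge to the left of $U_L$ bounded by comparison curves $\phi_1,\phi_2$, exhibiting the $2$-dimensional unstable manifold as the union of trajectories with $\alpha$-limit $U_L$. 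Varying $\xi$ over $(-\infty,\lambda_1(U_L))$ with $W$ slaved assembles these fibers into the $3$-dimensional smooth manifold $W^u(T_0^0(U_L))$ inside the $\varepsilon=0$ slice.

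Next I would transport these trajectories through the chain of substitutions \eqref{3.1.4}, \eqref{4.2.1}, \eqref{4.3.1}. The Chart 2 system \eqref{4.3.2} preserves the submanifold $\{r=0\}$ because every term of $dr/d\zeta$ carries $r$ as a factor, and this submanifold realizes the blown-up exceptional set where the $\varepsilon=0$ singular trajectories live. Hence each $\xi$-fiber of $W^u(T_0^0(U_L))$ has Chart 2 image inside $\{r=0\}$, and along such a fiber the orbit is parametrized by a single time variable $\tau$. Collecting over $\xi$ and tracking $W=F(U_L)-\xi U_L$ yields the claimed form $\{(a_\xi(\tau),0,W,\xi,b_\xi(\tau))\}$.

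For the intersection claim I would extend Proposition \ref{propq} fiberwise. That proposition produced, for $\xi=s_{\text{singular}}$, a unique trajectory on the invariant plane $\{r=0,\,W=W_L,\,\xi=s_{\text{singular}}\}$ with $\omega$-limit $q_L$, and the sign $b>0$ near $q_L$ was read off the eigenvector of the unique stable direction $R_4$ of $P_3$. Since the eigenvalues $\lambda_2,\lambda_3,\lambda_4$ and the eigenvectors $R_2,R_3,R_4$ do not depend on the base point within $P_3$, the argument applies verbatim for each $\xi<\lambda_1(U_L)$, replacing $q_L$ by the point of $C_3$ parametrized by that $\xi$. Assembling these unique trajectories over $\xi$ yields exactly the subset of $W^s(C_3)$ on which $b>0$, which is open because the condition $b>0$ is strict and depends continuously on $\xi$.

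The main obstacle will be making precise that the planar wedge from the first step maps, through the blow-up substitutions, into $\{r=0\}$ with the asymptotics \eqref{3.1.9}--\eqref{3.1.10}, so that the resulting Chart 2 trajectories genuinely approach $C_3$ with the predicted rate. This amounts to careful bookkeeping of orders of $\varepsilon$ during the pull-back together with use of the uniform structure of the eigenvector data on $P_3$; once this is checked, the sign calculation at $q_L$ propagates across all of $C_3$, and the continuity in $\xi$ furnishes the openness in $W^s(C_3)$.
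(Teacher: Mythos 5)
Your proposal is correct and follows essentially the same route as the paper: the paper's proof simply observes that Proposition \ref{propq} holds at each point of $C_3$ (since the argument never used the specific value $\xi=s_{\text{singular}}$ beyond its relation to the eigenvalues at $U_L$) and that positivity of $b$ follows from the explicit scaling, which is exactly your fiberwise extension over $\xi$. Your additional scaffolding (the planar source/wedge construction and the transport into $\{r=0\}$ in Chart 2) makes explicit what the paper leaves implicit via Propositions \ref{proposition4.1} and \ref{propq}.
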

\begin{proof}
The conclusion of Proposition \ref{propq}, which holds at each point of $C_3$, implies this
result.
The positivity of $b$ follows from the explicit scaling.
\end{proof}
The analogous result for $C_2$, and corresponding space $$W^s(N_2^0(U_R))=\{((a, r, W, \xi, b): (a,b)\in V_{\xi}, \ r=0, \ W=F(U_R)-\xi U_R, \ \lambda_2(U_R)<\xi \} ,$$
 are used to construct and analyze the
second half of the orbit.
For this purpose,
we note that $P_2$ has a $5$-dimensional stable manifold
$$W^s(P_2)=\{(a,r,W,\xi,b): b=0\}.$$
\subsection{\textbf{The Inner Solution}}
\label{section4.4}\
\\
We now seek the connection between $q_L$ and $q_R$. The curves $D_3$ of equilibria, in $P_3$ and $C_2$ are useful. The overcompression condition $s<\lambda_1(U_L)$ and $s>\lambda_2(U_R)$ is needed in this part or else the construction fails, because then $q_L$ is an endpoint of $C_3$ and we cannot verify Proposition \ref{prop6}, which we will need to apply the Corner Lemma at $q_L$ to match the inner with the outer solution.

The unstable manifold of $D_3$ has dimension three, and we have a description of its tangent space. It is spanned by the eigenvectors $R_2$ and $R_3$ of \eqref{4.3.3},
and can be written
\begin{equation}
\begin{aligned}
W^u(D_3)=\{(a,r,W,& \xi,b): w_1=w_{L1}, \ w_2=w_{L2}-\lim_{\varepsilon\rightarrow 0} \varepsilon^5 \int_{-\infty}^{\zeta} \frac{y_2(t)^2}{(y_1(t)^{16/15}+\varepsilon^{10})^{3/2}}\ dt, \\ 
& \ \ \ \ \ \xi=s_{\text{singular}}, \ b=b(\zeta), \ \zeta\in\mathbb{R}\}.
\end{aligned}
\label{4.4.1}
\end{equation}
\begin{figure}
\psfrag{1}{$y^2=4v$}
\psfrag{2}{$v$}
\psfrag{3}{$y$}
\psfrag{4}{$y^2=2v$}
\psfrag{5}{$U_L$}
\psfrag{6}{$U_R$}
\psfrag{7}{$a=a_3, r=b=0$}
\psfrag{8}{chart 1, outer solution}
\psfrag{9}{$a=a_2, r=b=0$}
\psfrag{10}{chart 2, inner solution}
\psfrag{11}{chart 2, inner solution}
\psfrag{12}{$a=a_2, r=0, b\neq 0$}
\psfrag{13}{chart 2, inner solution}
\psfrag{14}{chart 1, outer solution}
\begin{center}
\scalebox{0.8}{
\includegraphics{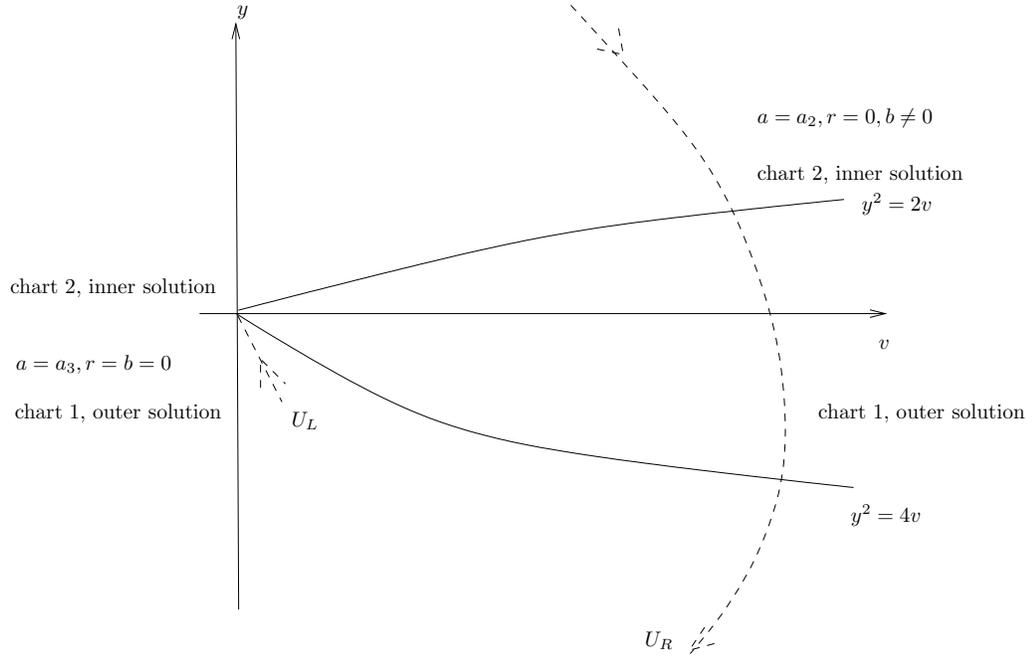}}
\caption{Solution when $\varepsilon=0$ in the $vy$-plane.}
\label{Figure4.3}
\end{center}
\end{figure}
\begin{figure}
\psfrag{1}{$a$}
\psfrag{2}{$r$}
\psfrag{3}{$b$}
\psfrag{4}{$a=a_3=b=r=0$}
\psfrag{5}{$a=a_2, r=b=0$}
\psfrag{6}{$a=a_2, r=0, b\neq 0$}
\psfrag{7}{$(\ref{2.1.4})$ has singular shocks}
\psfrag{8}{$(\ref{2.1.1})$ has singular shocks}
\begin{center}
\scalebox{0.8}{
\includegraphics{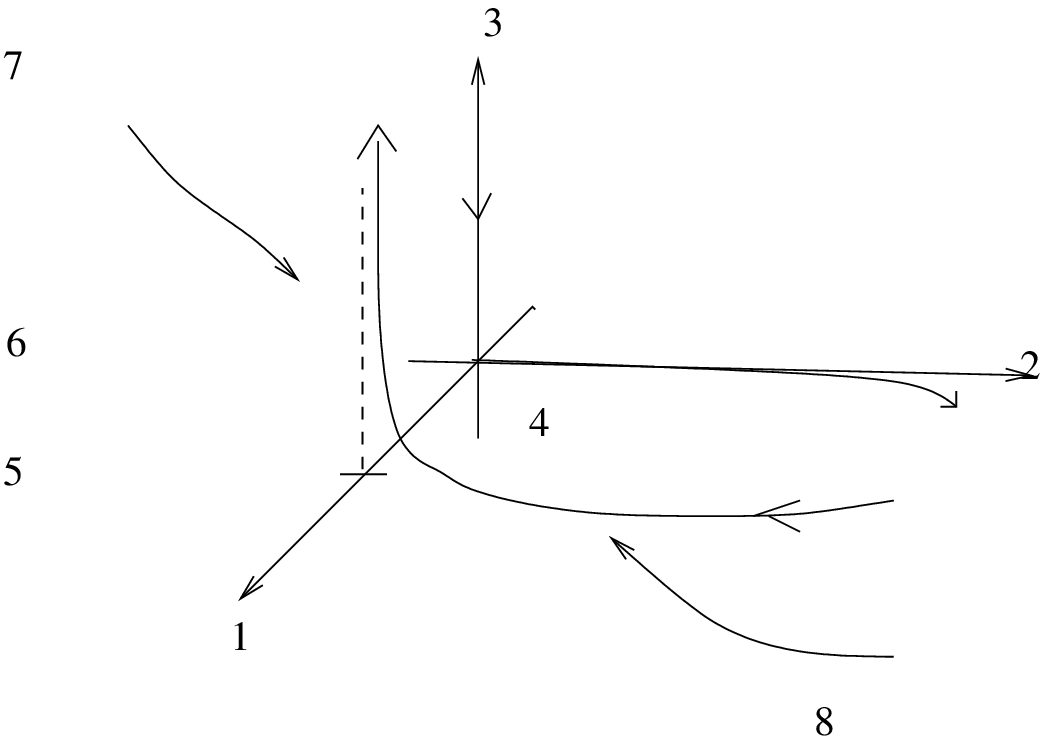}}
\caption{Solution when $\varepsilon=0$ in $arb$-space.}
\label{Figure4.4}
\end{center}
\end{figure}
\Remark
\label{remark1}
We observe that as $\zeta\rightarrow \infty,$ $r\rightarrow 0,$ $a\rightarrow a_2$ then $W^u(D_3)$ is tangent to 
\begin{equation} 
\{(a,r,W,\xi,b): a=a_2, \ r=0, \ W=W_R, \ \xi=s_\text{singular} \}
\end{equation} 
therefore $W^u(D_3)\cap W^s(N_2^0(U_R))\neq \emptyset.$ 
\Remark
\label{remark2}
$W^u(D_3) \supseteq W^u(C_3)\cap W^u(E_3)\neq \emptyset .$ 
\subsection{\textbf{Completion of the Result}}\
\\
The ingredients to be combined so as to synthesize the solution of the problem are now prepared. Three particular orbits have been constructed, each corresponding to the limit
$\varepsilon =0$: $A_1$, joining $U_L$ to $q_L$, $A_2$ joining $q_L$ to $q_R$,
and $A_3$ joining $q_R$ to $U_R$.
To show that a solution exists for $\varepsilon >0$, that will actually connect $U_L$
and $U_R$ via a solution of the equation, we need to
show that there is a solution, with $\varepsilon >0$, that is close to
the union of these three orbits.
The technique is to show that a solution close to $A_1$, in
$W^u(T_0^{\varepsilon}(U_L))$, will enter
$W^u(C_3)$, and similarly to match $W^u(D_3)$ with $W^s(T_2^{\varepsilon}(U_R))$.
We do this by verifying the conditions of the Corner Lemma
(Theorem 5.1 of Schecter \cite{Schecter}).
\begin{proposition} \label{prop6}
In the coordinate system of Chart 2, the sets
$W^u(T_0^{\varepsilon}(U_L))$ and $W^s(T_2^{\varepsilon}(U_R))$ will be denoted by $W^u(N_0^{\varepsilon}(U_L))$ and $W^s(N_2^{\varepsilon}(U_R)),$ respectively.

The $4$-dimensional set $W^u(N_0(U_L))=\cup_{0\leq \varepsilon \leq \varepsilon_0}W^u(N_0^{\varepsilon}(U_L))$ is transverse to $W^s(P_3)$ along $Q_3.$
\end{proposition}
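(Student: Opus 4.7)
The plan is to verify the transversality claim by a direct tangent-space computation at each $p \in Q_3$. In Chart 2 coordinates $(a,r,w_1,w_2,\xi,b)$ the ambient phase space is six-dimensional with $\varepsilon = rb$. Both $W^u(N_0(U_L))$ and $W^s(P_3)$ are four-dimensional, so transversality along the two-dimensional set $Q_3$ is equivalent to the requirement $T_p W^u(N_0(U_L)) + T_p W^s(P_3) = T_p \mathbf{X}$, which by dimension counting is the same as $T_p W^u(N_0(U_L)) \cap T_p W^s(P_3) = T_p Q_3$.

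I would first record the two tangent spaces. Since the stable eigenvector at each point of $P_3$ is $R_4 = (0,0,0,0,0,1) = \partial_b$, and $P_3 \subset \{r=0\}$, we have $W^s(P_3) \subset \{r=0\} \subset \{\varepsilon = 0\}$. At the corner $q_L$, $T W^s(P_3)$ is spanned by $\{\partial_{w_1},\partial_{w_2},\partial_\xi, R_4\}$, and at other $p \in Q_3$ it is obtained by pushing this plane forward along the linearized flow. The tangent $T_p W^u(N_0^0(U_L))$ is three-dimensional, generated by the two unstable directions at $U_L$ (pushed forward to $p$ and expressed in Chart 2 via the change of variables \eqref{4.3.1}) together with $\partial_\xi$ coming from varying $\xi$ over the one-dimensional set $T_0^0(U_L)$. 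The fourth generator of $T_p W^u(N_0(U_L))$ is obtained by differentiating the family in $\varepsilon$, and carries a nonzero $\partial_\varepsilon$-component.

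The crux of the argument, and the main obstacle, is to show that within the five-dimensional slice $\{\varepsilon = 0\}$ the three-dimensional $T_p W^u(N_0^0(U_L))$ and the four-dimensional $T_p W^s(P_3)$ already sum to a five-dimensional space. This amounts to verifying that the two unstable directions at $U_L$, transported along the connecting orbit of Proposition \ref{propq} and projected modulo $T_pP_3 + \mathrm{span}(R_4)$, remain linearly independent. The strict overcompression hypothesis $s_{\text{singular}} < \lambda_1(U_L)$ enters precisely here: it places $q_L$ in the interior of $C_3$, so that $\partial_\xi$ is genuinely a tangent direction of $C_3$, and by Proposition \ref{propq} it ensures the connecting orbit approaches $q_L$ with $b > 0$, so that its tangent at $p$ has a nontrivial $R_4$-component. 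I would carry out the verification by linearizing (\ref{4.3.2}) at $q_L$, using the eigenvalues $\lambda_2,\lambda_3,\lambda_4$ and the eigenvectors \eqref{4.3.3} computed in Section 4.3, and tracking the unstable tangent plane back along the orbit to $p$, following the pattern of Schecter \cite{Schecter} and Keyfitz--Tsikkou \cite{KT}.

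Finally, the full six-dimensional transversality follows immediately: $W^s(P_3) \subset \{\varepsilon = 0\}$, while $T_p W^u(N_0(U_L))$ contains a vector with nonzero $\partial_\varepsilon$-component, which supplies the sixth spanning direction. Continuity of the construction in $\varepsilon$ then delivers the same transversality for small $\varepsilon > 0$, which is exactly the input required by the Corner Lemma in the next step of the argument.
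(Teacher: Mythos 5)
Your proposal follows essentially the same route as the paper: a direct tangent-space computation in Chart 2, with the three unstable directions of $W^u(N_0^0(U_L))$ (the two fiber directions plus the $C_3$-direction $-v_L\partial_{w_1}-y_L\partial_{w_2}+\partial_\xi$ from varying $\xi$ over $T_0^0(U_L)$) spanning, together with $T_pW^s(P_3)$, the slice $\{r=0\}$, and the $\varepsilon$-derivative (the $\partial_r$-direction in Chart 2) supplying the sixth dimension. The paper simply exhibits the six spanning vectors $(1,0,0,0,0,0)$, $(0,0,0,0,0,1)$, $(0,0,-v_L,-y_L,1,0)$, $(0,1,0,0,0,0)$, $(0,0,1,0,0,0)$, $(0,0,0,1,0,0)$ and checks linear independence, which is exactly the verification you outline but defer.
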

\begin{proof}
When we calculate $W^u(N_0(U_L))$ at  $Q_3$ in the coordinate
system of Chart 2, we find
that
the tangent space to $W^u(N_0(U_L))$ is spanned by
\begin{align*}&(1,0,0,0,0,0), \\ &(0,0,0,0,0,1),\\
&(0,0,-v_{L},-y_{L},1,0), \\ &(0,1,0,0,0,0).
\end{align*}
The tangent space to $W^s(P_3)$ at the same point is spanned by
\begin{align*}&(0,0,1,0,0,0), \\ &(0,0,0,1,0,0).
\end{align*}
These six vectors are linearly independent; therefore transversality follows.
\end{proof}

\begin{figure}
\psfrag{b}{$b$}
\psfrag{r}{$r$}
\psfrag{a}{$a$}
\psfrag{p}{$p$}
\psfrag{q}{$q$}
\psfrag{q_L}{$q_L$}
\psfrag{Q_3}{$Q_3$}
\psfrag{Tilde}{$\tilde{N}_{\delta}$}
\psfrag{N_delta}{$N_{\delta}$}
\psfrag{P_3}{$P_3$}
\psfrag{U}{\hspace*{-0.2cm}$U$}
\psfrag{C_3}{$C_3$}
\psfrag{N}{$N$}
\begin{center}
\scalebox{0.9}{
\includegraphics{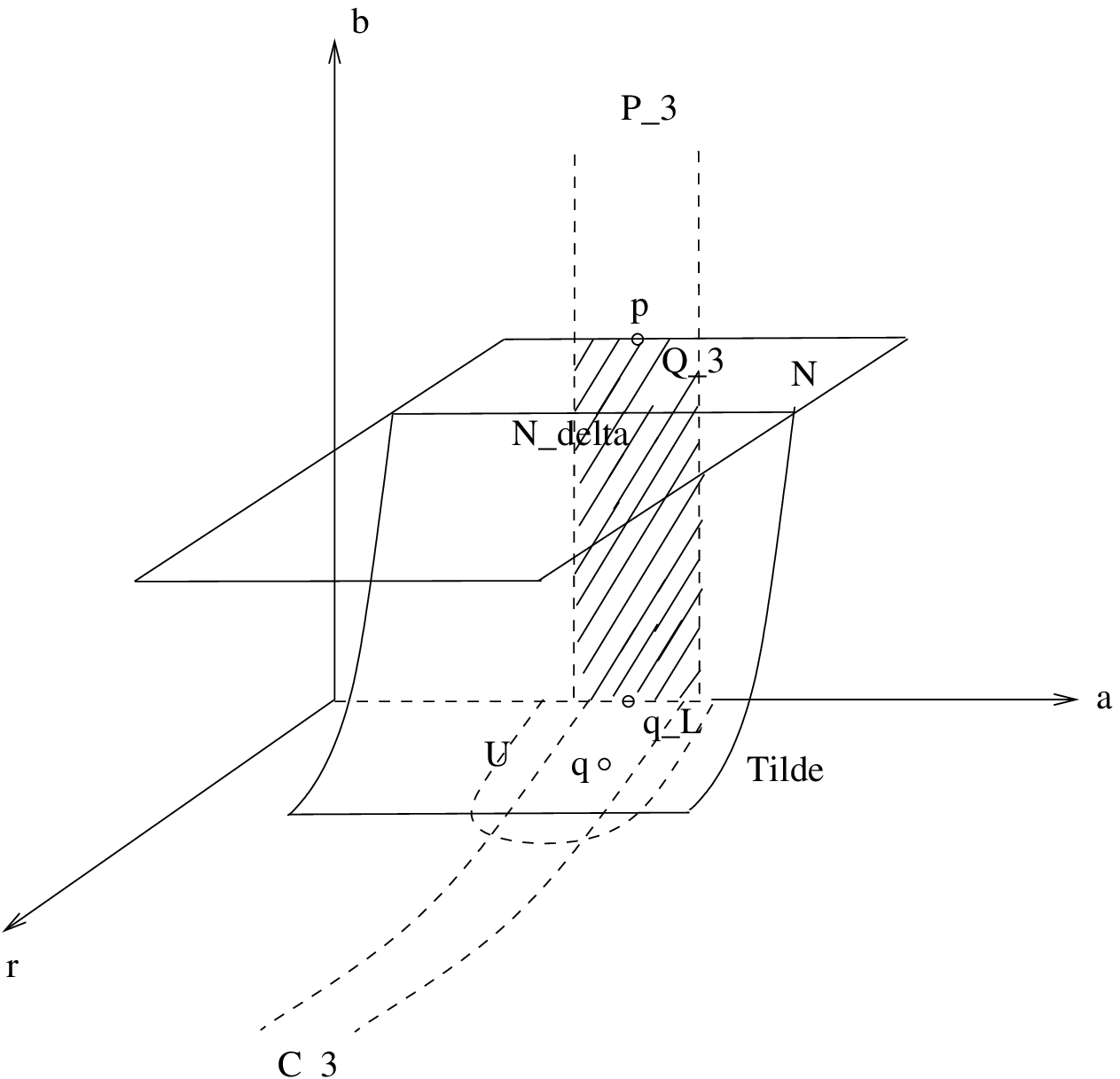}}
\caption{Corner Lemma.}
\label{Figure4.5}
\end{center}
\end{figure}

\begin{proof}[Proof of Theorem \ref{thm3.1}]
Proposition \ref{prop6} establishes the hypotheses of the
Corner Lemma \cite{Schecter}.
As Schecter showed in \cite{Schecter}, we have the 1-dimensional space
\begin{equation*}
W^s(q_L)=\{(a,r,W,\xi,b): r=0, \ \  W=W_L,  \xi=s_{\text{singular}}, \ \ a=a_3\}\,.
\end{equation*}
See Figure \ref{Figure4.5}.
We let $p \in W^s(q_L)\backslash\{q_L\}$, let $N$ be a 3-dimensional slice of
 $W^u(N_0(U_L))$
transverse to the vector field and to $W^s(P_3)$ at the point $p$;
let $N_{\delta}=N\cap\{r=\delta\}$, a 2-dimensional manifold;
let $q$ be in $W^u(C_3)$ with positive $r$ coordinate, and
let $U$ be a small neighborhood of $q$.

Then under the flow, $N_{\delta}$ becomes a 3-dimensional manifold $\tilde{N}_{\delta}$
(like $W^u(N_0^{\delta}(U_L))$) that passes near $q$. By the Corner Lemma,
\begin{equation}
\text{as}  \ \delta \rightarrow 0, \ \tilde{N}_{\delta}\cap U
\rightarrow W^u(C_3)\cap U \ \text{in the} \ C^1 \ \text{topology}.
\nonumber
\end{equation}
With the Lemma and Remarks in Section \ref{section4.4} we make the final match for the solution since
$W^u(N_0^{\varepsilon}(U_L))$ passes $q_L$ and arrives near
$q(-T)$ for $T>0,$ where
$q(\cdot)$ is given by (\ref{4.4.3}).
We then have a solution connecting $U_L$ and $U_R$.
As $\varepsilon \to 0$, this solution is unbounded.
This completes the proof of Theorem \ref{thm3.1}.
\end{proof}

\section{Acknowledgments} 
Foremost, the author would like to express her deepest thanks to her postdoctoral advisor Prof. Barbara Keyfitz for suggesting the problem and the change of variables leading to the system of equations (\ref{2.1.4}), for many illuminating discussions, and for her support and encouragement during this project. This work was started during the author's post-doctoral studies at the Ohio State University, whose support she also gratefully acknowledges. 

\bibliographystyle{amsplain}

\end{document}